\newcommand{\bib}{\bibitem}
\def\d{\operatorname d}
\def\Z{\mathbb{Z}}
\def\vep{\varepsilon}
\def\sup{\operatorname{sup}}
\def\sgn{\operatorname{sgn}}
\def\f{\mathcal F}
\def\fii{\varphi}
\def\l{\ell}
\def\ov{\overline}
\def\sub{\subseteq}
\def\{{\left\lbrace}
\def\}{\right\rbrace}
\def\d{\delta}
\def\vvs{\vskip 2 mm}
\def\vs{\vskip 5 mm}
\def\Lip{\text{Lip}}
\def\n{\mathcal N}
\def\m{\mathcal M}
\def\lip{\operatorname{Lip}}
\def\a{\alpha}
\def\b{\beta}
\def\F{\mathcal F}
\def\n{\|}
\def\({\big(}
\def\){\big)}
\def \bo {\Big(}
\def \bc {\Big)}
\def\({\bigl(}
\def\){\bigr)}
\newtheorem{definition}{Definition}
\newtheorem{theorem}{Theorem}
\newtheorem{lemma}[theorem]{Lemma}
\newtheorem{proposition}[theorem]{Proposition}
\newtheorem{corollary}[theorem]{Corollary}
\theoremstyle{remark}
\newtheorem*{remark*}{Remark}
\newcommand\absb[2]{\csname#1l\endcsname|#2\csname#1r\endcsname|}
\newcommand\normb[2]{\csname#1l\endcsname\|#2\csname#1r\endcsname\|}
\newcommand\N{\mathbb N}
\newcommand\R{\mathbb R}
\DeclareMathOperator{\dist}{dist}
\begin{document}

\title{Some remarks on the structure of Lipschitz-free spaces}

\author{Petr H\'ajek}
\address{Mathematical Institute\\Czech Academy of Science\\\v Zitn\'a 25
\\115 67 Praha 1\\
Czech Republic\\
and Department of Mathematics\\Faculty of Electrical Engineering\\
Czech Technical University in Prague\\ Zikova 4, 160 00, Prague}
\email{hajek@math.cas.cz}

\author{Mat\v ej Novotn\'y}
\address{Department of Mathematics\\Faculty of Electrical Engineering\\
Czech Technical University in Prague\\ Zikova 4, 160 00, Prague}
\email{novotny@math.feld.cvut.cz}

\subjclass[2000]{46B03, 46B10.}

\thanks{The work was supported in part by GA\v CR 16-073785, RVO: 67985840 and by grant SGS15/194/OHK3/3T/13 of CTU in Prague.}
\subjclass[2010]{46B20, 46T20}

\begin{abstract}
We give several structural results concerning the  Lipschitz-free spaces
$\mathcal F(M)$, where $M$ is a metric space. We show that $\mathcal F(M)$
contains a complemented copy of $\ell_1(\Gamma)$, where $\Gamma=\text{dens}(M)$.
If $\mathcal N$ is a net in a finite dimensional Banach space $X$, we
show that $\mathcal F(\mathcal N)$ is isomorphic to its square.
If $X$ contains a complemented copy of $\ell_p, c_0$ then $\f(\mathcal N)$
is isomorphic to its $\ell_1$-sum. Finally, we prove that
for all $X\cong C(K)$ spaces, where $K$ is a metrizable
compact,  $\f(\mathcal N)$ are mutually isomorphic
spaces with a Schauder basis.
\end{abstract}

\maketitle

\section{Introduction}

Let $(M,d)$  be a metric space  and $0\in M$ be a distinguished point.
The triple $(M,d,0)$ is called pointed metric space.
By $\Lip_0(M)$ we denote the Banach space of all Lipschitz real valued
functions $f:M\to\R$, such that $f(0)=0$. The norm of
$f\in\Lip_0(M)$ is defined
as the smallest Lipschitz constant $L=\Lip(f)$ of $f$, i.e.

$$\Lip(f)=\sup\{\frac{|f(x)-f(y)|}{d(x,y)},\ x,y\in M,\
x\neq y\}.$$

 The Dirac map
$\delta : M \to \text{Lip}_0(M)^*$ defined by
$\langle f, \delta(p)\rangle =f(p)$
for $f\in \text{Lip}_0(M)$ and $p\in M$ is an isometric
embedding from $M$ into $\text{Lip}_0(M)^*$.
Note that $\delta(0)=0$. The closed linear span of
$\{\delta(p),\ p\in M\}$ is denoted
$\mathcal F (M)$ and called the Lipschitz-free space over $M$
(or just free space, for short). Clearly,
\[
\|m\|_{\f(M)}=\sup\{\langle m,f\rangle: f\in\Lip_0(M), \|f\|\le1\}
\]

It follows from the compactness of the unit ball of $\text{Lip}_0(M)$ with
respect to the topology of
pointwise convergence, that $\mathcal F (M)$ can be seen as the canonical
predual of $\text{Lip}_0(M)$, i.e. $\f(M)^*=\Lip_0(M)$ holds isometrically
(\cite{W} Chapter 2 for details).
\vvs

Lipschitz free spaces have gained importance in the non-linear
structural theory of Banach spaces after the appearance of the
seminal paper \cite{GK} of Godefroy and Kalton, and the subsequent
work of these and many other authors e.g. \cite{K}, \cite{K2}, \cite{K3},
\cite{K4}, \cite{GO}, \cite{LP}, \cite{HP}, \cite{HLP}, \cite{PS}, \cite{Kauf},
\cite{CDW}, \cite{CD}, \cite{DF}, \cite{CJ}, \cite{NS}
\cite{D}, \cite{D2}, \cite{DKP}. Free spaces can be used efficiently
for constructions of various examples of Lipschitz-isomorphic Banach
spaces $X, Y$ which are not linearly isomorphic.
To this end, structural properties of their free spaces $\f(X)$, as well
as free spaces of their subsets, enter the game. For example,
in the separable setting,
$\f(X)$ contains a complemented copy of $X$ \cite{GK},
and it is isomorphic to its
$\ell_1$-sum. On the other hand,
if $\mathcal N$ is a net in $X$ then $\f(\mathcal N)$
is a Schur space \cite{K2} and it has the approximation property.

\vvs

A comprehensive background
on free spaces of metric spaces can be found in the book of Weaver \cite{W}.
There are several surveys exposing the applications of this notion
to the nonlinear structural theory of Banach spaces, in particular
\cite{K}, \cite{GLZ}.

\vs

Our first observation in this note is that $\f(M)$  contains a
complemented copy of $\ell_1(\Gamma)$, where $\Gamma$ is the density
character of an arbitrary infinite
metric space $M$. Our proof could be adjusted also
to the case $\Gamma=\omega_0$,
which is one of the main results in \cite{CDW}.

The main purpose of this note is to prove several structural
results, focusing mainly on the case when $M$ is a uniformly
discrete metric space, in particular a net $\mathcal N$ in a Banach space $X$.
Our results run parallel (as we have realized during the preparation
of this note) to those of Kaufmann \cite{Kauf}, resp. Dutrieux and Ferenczi
\cite{DF} which are concerned with the bigger (in a sense) space $\f(X)$.
However, the space $\f(\mathcal N)$ is only the linear quotient of $\f(X)$,
so the results are certainly not formally transferable.
In particular, the discrete setting prohibits the use of the "scaling towards zero" arguments
(used e.g. in \cite{Kauf}),
which leads to complications in proving that our free spaces
are linearly isomorphic to their squares, or even $\ell_1$-sums.
We are able to show these facts at least for nets in finite
dimensional Banach spaces and all classical Banach spaces.
Surprisingly, the proofs for the finite dimensional
case and the infinite dimensional case  are rather different.

Our main technical result is that $\f(\mathcal N)$ has a Schauder
basis for all nets in $C(K)$ spaces, $K$ metrizable compact. The
constructive proof is obtained in $c_0$, and the result is then transferred
into the $C(K)$ situation by using the abstract theory developed in the
first part of our note.

\vs

Let us start with some definitions and preliminary results.
Let $N\subset M$ be metric spaces, and assume that the distinguished point
$0\in M$ serves as a distinguished point in $N$ as well. Then the identity
mapping leads to the canonical isometric embedding
$\mathcal F(N)\hookrightarrow\mathcal F(M)$ (\cite{W} p.42). In order to
study the complementability properties of this subspace, one can rely on
the theory of quotients of metric space, as outlined in \cite{W} p.11
or \cite{Kauf}.
For our purposes we will outline an alternative (but equivalent)
description of the situation.

\begin{definition}
Let $N\subset M$ be metric spaces, $0\in N$. We denote by
\[
Lip_N(M)=\{f\in Lip_0(M): f|_N=0\}.
\]
\end{definition}

It is clear that $Lip_N(M)$ is a closed linear subspace of $Lip_0(M)$,
which is moreover $w^*$-closed. Indeed,
by the general perpendicularity principles (\cite{FHHMZ} p.56) we obtain

\[
Lip_N(M)=\mathcal F(N)^\perp,\;\; \mathcal F(N)=Lip_N(M)_\perp
\]
Hence there is a canonical isometric isomorphism
\[
Lip_N(M)\cong (\mathcal F(M)/\mathcal F(N))^*
\]

Since the space of all finite linear combinations of Dirac functionals
is linearly dense in $\f(M)$, resp.  also in $\f(N)$, it is clear
that the image of finite linear combinations of Dirac functionals supported
outside the set $N$, under the quotient mapping
$\f(M)\to\f(M)/\f(N)$ is linearly dense. Moreover, it is nonzero
for nontrivial combinations.

\begin{definition}
If $\mu=\sum_{j=1}^n a_j\delta_{t_j}: t_j\in M\setminus N$ then we
let
\[
\|\mu\|_{\mathcal F_N(M)}=\sup\langle\mu, f\rangle,\;\;f\in Lip_N(M),
\;\|f\|\le1.
\]

\[
\mathcal F_N(M)=\overline{\{\mu=\sum_{j=1}^n a_j\delta_{t_j}:
t_j\in M\setminus N\}}^{\|\cdot\|_{\mathcal F_N(M)}}.
\]
i.e. we complete the space of finite sums of Dirac functionals with respect to
the duality
\[
\langle \mathcal F_N(M), Lip_N(M)\rangle.
\]
\end{definition}

Clearly, our definition gives an isometric isomorphism

\[
\mathcal F_N(M)\cong \mathcal F(M)/\mathcal F(N)
\]

\begin{proposition}\label{ret}

Let $N\subset M$ be metric spaces. If there exists a Lipschitz
retraction $r:M\to N$ then

\[
\mathcal F(M)\cong \mathcal F(N)\oplus\mathcal F_N(M).
\]
\end{proposition}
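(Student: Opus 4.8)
The plan is to realize the Lipschitz retraction $r$ as a bounded linear projection at the level of free spaces, and then to split $\f(M)$ along this projection. First I would invoke the standard functorial property of free spaces: any base-point preserving Lipschitz map induces a bounded linear operator between the corresponding free spaces that sends Dirac functionals to Dirac functionals. Since $0\in N$ and $r|_N=\mathrm{id}_N$, we have $r(0)=0$, so $r$ is base-point preserving and determines a map $\hat r\colon \f(M)\to\f(N)$ given on finite combinations by $\hat r\big(\sum_j a_j\delta_{t_j}\big)=\sum_j a_j\delta_{r(t_j)}$.

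The key computation is the boundedness of $\hat r$. For $g\in\Lip_0(N)$ with $\|g\|\le 1$, the composition $g\comp r$ lies in $\Lip_0(M)$ with $\Lip(g\comp r)\le \Lip(r)$, and the duality identity $\langle \hat r(\mu),g\rangle=\langle \mu, g\comp r\rangle$ then gives $\|\hat r(\mu)\|_{\f(N)}\le \Lip(r)\,\|\mu\|_{\f(M)}$. Hence $\hat r$ extends to a bounded operator $\f(M)\to\f(N)$ with $\|\hat r\|\le \Lip(r)$. Because $r$ fixes $N$ pointwise, $\hat r(\delta_s)=\delta_s$ for every $s\in N$, so by linear density $\hat r$ restricts to the identity on the canonically embedded subspace $\f(N)\subset\f(M)$. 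Viewed as an operator into $\f(M)$, the map $\hat r$ is therefore a bounded linear projection of $\f(M)$ onto $\f(N)$.

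With the projection in hand the decomposition is immediate: $\f(M)=\f(N)\oplus\ker\hat r$. It remains to identify the complementary summand $\ker\hat r$ with $\mathcal F_N(M)$. For this I would use the identification $\mathcal F_N(M)\cong \f(M)/\f(N)$ established above together with the quotient map $q\colon \f(M)\to\f(M)/\f(N)$, whose kernel is exactly $\f(N)$. Restricting $q$ to $\ker\hat r$ yields a bounded linear bijection onto $\f(M)/\f(N)$ (injective since $\ker\hat r\cap\f(N)=\{0\}$, surjective since $\f(N)+\ker\hat r=\f(M)$), which is an isomorphism by the open mapping theorem. Combining the two steps gives $\f(M)\cong\f(N)\oplus\mathcal F_N(M)$.

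The only genuinely non-routine point is the boundedness estimate for $\hat r$, and even this is just the standard "composition with $r$" argument; the remainder is the elementary fact that a bounded projection splits a space into the direct sum of its range and kernel, with the kernel isomorphic to the quotient by the range. I would also remark that the resulting isomorphism is not claimed to be isometric, since $\|\hat r\|=\Lip(r)$ may exceed $1$; the isometric structure is recovered only when $r$ can be taken to be $1$-Lipschitz.
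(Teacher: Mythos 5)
Your proof is correct, but it follows a genuinely different route from the paper's. The paper's entire proof is a deferral to the literature: it invokes the metric-quotient framework (Kaufmann's Lemma 2.2) together with the identification $\mathcal F_N(M)\cong\mathcal F(M/N)$, where $M/N$ is the metric space obtained by collapsing $N$ to the base point. You never touch metric quotients: you linearize the retraction $r$ to a bounded operator $\hat r\colon\f(M)\to\f(N)$ with $\|\hat r\|\le\Lip(r)$, observe that composed with the canonical embedding $\f(N)\hookrightarrow\f(M)$ it becomes a bounded linear projection onto the embedded copy of $\f(N)$, and then identify the complementary summand $\ker\hat r$ with the linear quotient $\f(M)/\f(N)\cong\mathcal F_N(M)$ by restricting the quotient map and applying the open mapping theorem --- the last identification being precisely the one the paper establishes just before the proposition. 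All the steps check out: the duality computation $\langle\hat r(\mu),g\rangle=\langle\mu,g\circ r\rangle$ gives the norm bound, $\hat r$ fixes $\f(N)$ because $r$ fixes $N$ pointwise, and injectivity plus surjectivity of the restricted quotient map are exactly the algebraic facts that a projection splits the space. What your argument buys is self-containedness and an explicit bound (the projection has norm at most $\Lip(r)$, and the splitting constant is controlled by it); what the paper's citation buys is brevity and the conceptual point that $\mathcal F_N(M)$ is itself a free space, namely $\f(M/N)$, a fact your proof does not recover. In effect you have written out the proof of the lemma the paper cites rather than reproduced the paper's (essentially one-line) argument.
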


This follows readily from the
alternative description using metric quotients  (e.g. in \cite{Kauf} Lemma 2.2)
using the fact that $\mathcal F_N(M)\cong\mathcal F(M/N)$.

We say that the metric space $(M,d)$ is $\delta$-uniformly discrete if
there exists $\delta>0$ such that $d(x,y)\ge\delta, x,y\in M$.
The metric space is uniformly discrete if it is $\delta$-uniformly
discrete for some $\delta>0$.

If $\alpha, \beta>0$ we say that a subset $N\subset M$ is
a $(\alpha,\beta)$-net in $M$ provided it is $\alpha$-uniformly
discrete and $d(x,N)<\beta, x\in M$.

It is easy to see that every maximal $\delta$-separated subset $N\subset M$,
which exists due to the Zorn maximal principle,
is automatically a $(\delta,\delta+\vep)$-net, for any $\vep>0$.

\begin{proposition}\label{l-1-sum}
Let $(M,d,0)$ be a pointed
metric space, $K>0$,  $\{M_\alpha\}_{\alpha\in\Gamma}$
be a system of pairwise disjoint subsets of $M$, and
$0\in N\subset M\setminus\cup_{\alpha\in\Gamma}M_\alpha$.
Suppose that for all $\beta\in\Gamma$ and all $x\in M_{\beta}$ holds
\[
d(x,\cup_{\alpha\in\Gamma, \alpha\ne\beta}M_\alpha)\ge Kd(x,N).\;\;
\]
Then
\[
\mathcal{F}_N(N\cup\cup_{\alpha\in\Gamma}M_\alpha)\cong
(\oplus_{\alpha\in\Gamma}
\mathcal{F}_N(N\cup M_\alpha))_{\ell_1(\Gamma)}.
\]
In particular, if $N=\{0\}$ then
\[
\mathcal{F}(\{0\}\cup\cup_{\alpha\in\Gamma}M_\alpha)\cong(\oplus_{\alpha\in\Gamma}
\mathcal{F}(\{0\}\cup M_\alpha))_{\ell_1(\Gamma)}.
\]
\end{proposition}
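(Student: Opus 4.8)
The plan is to exhibit the claimed isomorphism explicitly on finitely supported measures and then extend by density. Write $P=N\cup\bigcup_{\alpha\in\Gamma}M_\alpha$. Since $P\setminus N=\bigcup_\alpha M_\alpha$ is a disjoint union, every finitely supported $\mu=\sum_j a_j\delta_{t_j}$ with $t_j\in P\setminus N$ splits uniquely as $\mu=\sum_\alpha\mu_\alpha$, where $\mu_\alpha$ gathers the terms with $t_j\in M_\alpha$; only finitely many $\mu_\alpha$ are nonzero. I would set $\Phi(\mu)=(\mu_\alpha)_\alpha$ and prove the two-sided estimate
\[
\|\mu\|_{\mathcal F_N(P)}\ \le\ \sum_\alpha\|\mu_\alpha\|_{\mathcal F_N(N\cup M_\alpha)}\ \le\ \max\{1,2/K\}\,\|\mu\|_{\mathcal F_N(P)}.
\]
Granting this, $\Phi$ is bounded and bounded below on a dense subspace, hence extends to an isomorphism of $\mathcal F_N(P)$ onto a closed subspace of the $\ell_1$-sum; as the image already contains all tuples with finitely many (finitely supported) nonzero entries, which are dense in $(\bigoplus_\alpha\mathcal F_N(N\cup M_\alpha))_{\ell_1(\Gamma)}$, the map $\Phi$ is onto, giving the isomorphism.

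The left-hand (easy) inequality is pure duality. If $f\in Lip_N(P)$ with $\|f\|\le1$, then $f|_{N\cup M_\alpha}\in Lip_N(N\cup M_\alpha)$ with Lipschitz norm at most $1$, and since $\mu_\alpha$ is supported in $M_\alpha$ we have $\langle\mu_\alpha,f\rangle=\langle\mu_\alpha,f|_{N\cup M_\alpha}\rangle\le\|\mu_\alpha\|_{\mathcal F_N(N\cup M_\alpha)}$. Summing over the finitely many nonzero $\mu_\alpha$ and taking the supremum over such $f$ yields $\|\mu\|_{\mathcal F_N(P)}\le\sum_\alpha\|\mu_\alpha\|_{\mathcal F_N(N\cup M_\alpha)}$.

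The substance of the proof is the right-hand inequality, which I would obtain by gluing optimal test functions. Fix $\varepsilon>0$; for each $\alpha$ with $\mu_\alpha\ne0$ choose $f_\alpha\in Lip_N(N\cup M_\alpha)$, $\|f_\alpha\|\le1$, with $\langle\mu_\alpha,f_\alpha\rangle\ge\|\mu_\alpha\|_{\mathcal F_N(N\cup M_\alpha)}-\varepsilon_\alpha$, where the finitely many $\varepsilon_\alpha$ sum to less than $\varepsilon$. Define $f$ on $P$ by $f|_N=0$ and $f|_{M_\alpha}=f_\alpha|_{M_\alpha}$, so that $\langle\mu,f\rangle=\sum_\alpha\langle\mu_\alpha,f_\alpha\rangle$. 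The task is to bound $\|f\|$. For $x,y$ in one $M_\alpha$, and for $x\in M_\alpha$, $y\in N$ (where $f(y)=0=f_\alpha(y)$), the bound $|f(x)-f(y)|\le d(x,y)$ follows at once from $\|f_\alpha\|\le1$. The only delicate case is $x\in M_\alpha$, $y\in M_\beta$ with $\alpha\ne\beta$. Using $f_\alpha|_N=0$ and $\|f_\alpha\|\le1$ we get $|f(x)|\le d(x,N)$ and $|f(y)|\le d(y,N)$; on the other hand the hypothesis, applied to the index $\alpha$ and the point $x$ (note $y\in\bigcup_{\gamma\ne\alpha}M_\gamma$) and symmetrically to $\beta$ and $y$, gives $d(x,y)\ge K\,d(x,N)$ and $d(x,y)\ge K\,d(y,N)$. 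Hence
\[
|f(x)-f(y)|\le d(x,N)+d(y,N)\le \frac{2}{K}\,d(x,y),
\]
so $\|f\|\le\max\{1,2/K\}$. Applying the norm estimate to $f/\max\{1,2/K\}\in Lip_N(P)$ and letting $\varepsilon\to0$ gives $\sum_\alpha\|\mu_\alpha\|_{\mathcal F_N(N\cup M_\alpha)}\le\max\{1,2/K\}\,\|\mu\|_{\mathcal F_N(P)}$.

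The main obstacle is exactly this cross-term estimate, and it is the sole point where the separation hypothesis is used: a priori the pointwise values $|f(x)|,|f(y)|$ are only comparable to $d(x,N),d(y,N)$, which need bear no relation to $d(x,y)$, and it is the inequality $d(x,\bigcup_{\gamma\ne\alpha}M_\gamma)\ge K\,d(x,N)$ that turns them into a genuine Lipschitz bound between distinct pieces. Finally, the ``in particular'' statement is the special case $N=\{0\}$: since $\mathcal F(\{0\})=0$ one has $\mathcal F_{\{0\}}(X)=\mathcal F(X)/\mathcal F(\{0\})=\mathcal F(X)$ for every $X\supset\{0\}$, so the general statement specializes directly.
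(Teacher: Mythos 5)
Your proposal is correct and follows essentially the same route as the paper: the paper's one-sentence proof rests precisely on the gluing fact you develop in detail, namely that a collection of $1$-Lipschitz functions $f_\alpha\in Lip_N(N\cup M_\alpha)$ extends, by the separation hypothesis, to a single function in $Lip_N(N\cup\bigcup_\alpha M_\alpha)$ with Lipschitz constant depending only on $K$. Your cross-term estimate $|f(x)-f(y)|\le d(x,N)+d(y,N)\le \frac{2}{K}d(x,y)$ makes the constant explicit as $\max\{1,2/K\}$ (the paper loosely writes $\frac1K$), and the duality/density bookkeeping you add is exactly what the paper leaves implicit.
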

\begin{proof}
The result is immediate as any collection of
$1$-Lipschitz functions $f_\alpha\in\Lip_{N}(N\cup M_\alpha)$
is the restriction of a $\frac1K$-Lipschitz function
$f\in\Lip_{N}(N\cup \cup_{\alpha\in\Gamma}M_\alpha)$
\end{proof}

Recall that the density character $\text{dens}(M)$, or just density,
of a metric space $M$
is the smallest cardinal $\Gamma$ such that there exists dense subset
of $M$ of cardinality $\Gamma$.

Let $\Gamma$ be a cardinal
(which is identified with the smallest ordinal of the same cardinality).
By the cofinality $\text{cof}(\Gamma)$
we denote the smallest ordinal $\alpha$ (in fact a cardinal) such that
$\Gamma=\lim_{\beta<\alpha}\Gamma_\beta$, where $\Gamma_\beta$ is an
increasing ordinal sequence (\cite{J} p.26).

\section{Structural properties}

\begin{proposition}\label{l-1}
Let $M$ be a metric space of density $\text{dens}(M)=\Gamma$. Then $\f(M)$
contains a complemented copy of $\ell_1(\Gamma)$.
\end{proposition}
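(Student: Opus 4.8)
The plan is to realise $\ell_1(\Gamma)$ inside $\f(M)$ as the free space over a well-chosen subset, together with a bounded projection onto it. For the projection I would argue dually, in the spirit of Proposition \ref{ret}: if $A\subset M$ with $0\in A$ and there is a bounded linear extension operator $E\colon\Lip_0(A)\to\Lip_0(M)$ (so that $E(g)|_A=g$), then, since weak${}^*$ convergence in $\Lip_0$ is pointwise convergence, $E$ is weak${}^*$--weak${}^*$ continuous and its pre-adjoint is a bounded projection of $\f(M)$ onto the isometric copy of $\f(A)$. So it suffices to find $A\ni 0$ with $\f(A)\cong\ell_1(\Gamma)$ and such an $E$. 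Both will come from a relatively separated family plus Proposition \ref{l-1-sum}.

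The engine is this. Suppose $\{x_\alpha\}_{\alpha<\Gamma}\subset M\setminus\{0\}$ is $K$-relatively separated, i.e. $d(x_\alpha,x_\beta)\ge K\max(d(x_\alpha,0),d(x_\beta,0))$ for $\alpha\ne\beta$. Taking $M_\alpha=\{x_\alpha\}$ and $N=\{0\}$, Proposition \ref{l-1-sum} gives $\f(\{0\}\cup\{x_\alpha\})\cong\ell_1(\Gamma)$, while the disjointly supported tent functions $\varphi_\alpha$ of radius $\tfrac K4 d(x_\alpha,0)$ furnish the extension operator: inside one bump the Lipschitz constant is controlled by $1/K$, and between two bumps one uses $|g(x_\alpha)|\le\Lip(g)\,d(x_\alpha,0)\le K^{-1}\Lip(g)\,d(x_\alpha,x_\beta)$, so $\Lip(E(g))\le C(K)\Lip(g)$. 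Thus everything reduces to producing a relatively separated family of size $\Gamma$. To find one I would discretise by scale, $A_n=\{x:2^n\le d(x,0)<2^{n+1}\}$, so that $\Gamma=\sup_n\dens(A_n)$ and, inside a single annulus, an absolutely $\varepsilon$-separated set is automatically relatively separated. When $\operatorname{cof}(\Gamma)>\omega$ this already finishes the proof: a countable supremum of cardinals $<\Gamma$ cannot reach $\Gamma$, so some $A_{n_0}$ has $\dens(A_{n_0})=\Gamma$, and being a bounded set of uncountable density it carries an $\varepsilon$-separated family of size $\Gamma$ at that single scale.

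The hard part is the singular case $\operatorname{cof}(\Gamma)=\omega$, which I expect to be the main obstacle: the density may live below \emph{every} fixed relative scale, inside bounded clusters receding from $0$. A clean illustration is $M=\{0\}\cup\bigcup_j D_j$, where $D_j$ is a set of $\aleph_j$ points of diameter $\le 10$ placed at distance $R_j\to\infty$; then $\dens(M)=\aleph_\omega$, yet any $K$-relatively separated family meets each far cluster in at most one point and so has size $<\aleph_\omega$. Here the engine cannot be applied in one step, and the remedy is to induct on $\Gamma=\dens(M)$: split $M$ into relatively separated blocks $M_\alpha$ of density $<\Gamma$ with densities cofinal in $\Gamma$, apply the inductive hypothesis to obtain a complemented $\ell_1(\dens M_\alpha)$ inside each $\f(\{0\}\cup M_\alpha)$, and then glue by Proposition \ref{l-1-sum} to get $\f(\{0\}\cup\bigcup_\alpha M_\alpha)\cong(\oplus_\alpha \f(\{0\}\cup M_\alpha))_{\ell_1}\supset\ell_1(\Gamma)$; the block extension operators compose with the gluing one with uniformly bounded norm because the between-block relative separation constant is uniform. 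The delicate point is producing such a block decomposition with strictly smaller densities while keeping the blocks relatively separated: it must fail for ``connected'' high-density pieces such as the ball of $\ell_2(\Gamma)$, which are instead caught by the single-scale family of unit vectors $e_\alpha$; reconciling these two regimes through $\operatorname{cof}(\Gamma)$ is where the real work lies.

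Finally, the countable case $\Gamma=\omega$ I would treat separately, as the authors indicate, by adapting \cite{CDW}: one uses points accumulating geometrically towards a single limit, again giving $d(x_k,x_l)\ge K\max(d(x_k,0),d(x_l,0))$. This is precisely the configuration the scale decomposition produces when every annulus is finite, as happens for $M=[0,1]$, where no infinite uniformly discrete set exists but $\f([0,1])$ still contains a complemented $\ell_1$.
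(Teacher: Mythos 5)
Your route is genuinely different from the paper's, and where it works it is self-contained: the paper never constructs a projection at all. It embeds $c_0(\Gamma)$ into $\Lip_0(M)=\f(M)^*$ by means of disjointly supported $1$-Lipschitz bumps and then quotes Rosenthal's theorem (\cite{ro}, Corollary 1.2): $c_0(\Gamma)\hookrightarrow X^*$ already forces a complemented copy of $\ell_1(\Gamma)$ in $X$. Your primal scheme --- a relatively separated family, Proposition \ref{l-1-sum} to identify $\f(\{0\}\cup\{x_\alpha\}_\alpha)$ with $\ell_1(\Gamma)$, and a weak$^*$-to-weak$^*$ continuous extension operator whose pre-adjoint gives the projection --- is correct as far as it goes, and your annulus argument does produce the required family whenever $\cf(\Gamma)>\omega_0$. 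Up to that point the proofs diverge legitimately: yours buys an explicit projection, the paper's buys generality.

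The genuine gap is the singular case $\cf(\Gamma)=\omega_0$, and you concede it yourself (``where the real work lies''). The two regimes you propose --- a single-scale separated family, or a relatively separated block decomposition into pieces of smaller density plus induction --- do not exhaust the possibilities, and there are spaces defeating both at once. For $\Gamma=\aleph_\omega$, place at distance $1$ from the origin a nested system of clusters in which $\aleph_k$ clusters occur at scale $2^{-k}$, joined by chains of points with arbitrarily small steps. Every fixed-scale separated subset then has cardinality $<\Gamma$ (and here relative and absolute separation coincide, since all points are at distance about $1$ from $0$), so no single-scale family exists; while $\varepsilon$-chain-connectedness for every $\varepsilon>0$ means the set admits no partition into two nonempty pieces at positive mutual distance, so no relatively separated block decomposition exists either, of any kind. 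This is exactly what the paper's dual argument survives: it takes maximal $2^{-k_n}$-separated sets $M_n$ with $|M_n|=\Gamma_{k_n}$ strictly increasing to $\Gamma$, inductively extracts $L_n\subset M_n$ with $|L_n|=\Gamma_{k_n}$ and (after discarding countably many points) $\dist(L_n,L_m)\ge\max\{2^{-k_n-1},2^{-k_m-1}\}$, and places bumps of radius $2^{-k_n-2}$ at the points of $L_n$. Only disjointness of supports across these mixed scales is needed --- not relative separation from the base point, which is what both Proposition \ref{l-1-sum} and your tent-function extension operator require --- because complementation is delegated to Rosenthal's theorem. So your proof covers $\cf(\Gamma)>\omega_0$ only; to close the singular case you must either solve the decomposition problem you pose (which the example above shows is unsolvable as stated) or switch there to a dual, projection-free argument as the paper does.
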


\begin{proof}
For convenience we may assume that $\Gamma>\omega_0$, because
this case has been already proved in \cite{CDW} (Our proof can be
adjusted to this case as well).
By  (\cite{ro} Corollary 1.2) if $c_0(\Gamma)\hookrightarrow X^*$ then
$\ell_1(\Gamma)$ is complemented in $X$. So it suffices to prove that
$\Lip_0(M)$ contains a copy of $c_0(\Gamma)$.
For every $n\in\N$ let $M_n$ be some maximal $\frac1{2^n}$-separated set in $M$.
Denote $\Gamma_n=|M_n|$. It is clear that
$\text{dens}(M)=\lim_{n\to\infty}\Gamma_n$, in the cardinal sense.
In case when the cofinality $\text{cof}(\Gamma)>\omega_0$, it is clear that
$\Gamma_n=\Gamma$, for some $n\in\N$. In this case, let
$\{f_\alpha:\alpha\in\Gamma_n\}$ be a transfinite sequence
of $1$-Lipschitz functions such that $f_\alpha(x_\alpha)=\frac1{2^{n+3}}$ and
$\text{supp}(f_\alpha)\subset B(x_\alpha,\frac1{2^{n+2}})$. Since
the supports of $f_\alpha$ are pairwise disjoint it is clear that
$\{f_\alpha\}_{\alpha\in\Gamma_n}$ is equivalent to the
unit basis of $c_0(\Gamma)$ and the
result follows. In the remaining case, we may assume that
$\{\Gamma_{k_n}\}_{n=1}^\infty$
is a strictly increasing sequence of cardinals.
Denote $M_n=\{x^n_\alpha\}_{\alpha\in\Gamma_{k_n}}$.
Let $L_1=M_1$. By induction we will construct
sets $L_n\subset M_n$ as follows. Inductive step towards $n+1$.
Consider the sets
\[
A_{j,\alpha}=M_{n+1}\cap B(x_\alpha^j,\frac1{2^{k_j+1}}),\;\;
{j\le n,\alpha\in\Gamma_{k_j}}
\]

If there is some $j,\alpha$ so that $|A_{j,\alpha}|=\Gamma_{k_{n+1}}$
then we let $L_{n+1}=A_{j,\alpha}$. Otherwise we let

\[
L_{n+1}=M_{n+1}\setminus\cup_{j\le n,\alpha\in\Gamma_{k_j}} A_{j,\alpha}
\]

In either case we have $|L_{n+1}|=\Gamma_{k_{n+1}}$.
 By discarding suitable countable
subsets of these sets $L_{n}$ we can assume that
\[
\text{dist}(L_n, L_m)\ge \max\{\frac 1{2^{k_n+1}},\frac 1{2^{k_m+1}}\}
\]

To finish, let
$\{f^n_\alpha: x^n_\alpha\in L_n, n\in\N\}$ be a transfinite sequence
of $1$-Lipschitz disjointly supported
functions such that $f^n_\alpha(x^n_\alpha)=\frac1{2^{k_n+3}}$  and
$\text{supp}(f^n_\alpha)\subset B(x^n_\alpha,\frac1{2^{k_n+2}})$.
 This sequence is equivalent to the basis
 of $c_0(\Gamma)$, which finishes the proof.
\end{proof}

\begin{theorem}\label{eq-n}
Let  $N, M$ be  uniformly discrete infinite
sets of the same cardinality such that $N\subset M$
is a net. Then
$\mathcal F(N)\cong\mathcal F(M)$.
\end{theorem}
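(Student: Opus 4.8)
The plan is to combine the two preceding propositions with the elementary fact that the free space of a bounded uniformly discrete space is isomorphic to $\ell_1$. Write $N$ for an $(\alpha,\beta)$-net in the $\delta$-uniformly discrete space $M$, and set $\Gamma:=|M|=|N|$. First I would produce a Lipschitz retraction $r\colon M\to N$: for each $x\in M\setminus N$ choose some $r(x)\in N$ with $d(x,r(x))<\beta$ (possible since $N$ is $\beta$-dense), and put $r(x)=x$ for $x\in N$. Because $d(x,y)\ge\delta$ for distinct points of $M$, the triangle inequality gives $d(r(x),r(y))<d(x,y)+2\beta\le(1+2\beta/\delta)\,d(x,y)$, so $r$ is Lipschitz. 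Proposition \ref{ret} then yields
\[
\mathcal F(M)\cong\mathcal F(N)\oplus\mathcal F_N(M).
\]

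Next I would identify the second summand. Using $\mathcal F_N(M)\cong\mathcal F(M/N)$, it suffices to understand the metric quotient $M/N$ obtained by collapsing $N$ to the base point. Every non-collapsed point $x$ satisfies $\delta\le d(x,N)<\beta$, the lower bound holding because $M$ is $\delta$-uniformly discrete and hence $x$ lies at distance $\ge\delta$ from every net point; a short computation with the quotient metric then shows that all nonzero distances in $M/N$ lie in $[\delta,2\beta)$. Thus $M/N$ is a bounded uniformly discrete pointed space whose non-base points have cardinality $\Lambda:=|M\setminus N|\le\Gamma$. The identity onto the equilateral space on the same underlying set is bi-Lipschitz, so $\mathcal F_N(M)\cong\mathcal F(M/N)\cong\ell_1(\Lambda)$.

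Finally I would absorb this $\ell_1$-summand into $\mathcal F(N)$. Since $N$ is uniformly discrete, $\dens(N)=|N|=\Gamma$, so Proposition \ref{l-1} provides a decomposition $\mathcal F(N)\cong\ell_1(\Gamma)\oplus Z$ for some $Z$. As $\Lambda\le\Gamma$ and $\Gamma$ is infinite, $\ell_1(\Gamma)\oplus\ell_1(\Lambda)\cong\ell_1(\Gamma)$, whence
\[
\mathcal F(M)\cong\mathcal F(N)\oplus\ell_1(\Lambda)\cong\ell_1(\Gamma)\oplus Z\oplus\ell_1(\Lambda)\cong\ell_1(\Gamma)\oplus Z\cong\mathcal F(N),
\]
which is the assertion. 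I expect the only genuinely delicate point to be the verification that $M/N$ is bounded and uniformly discrete, in particular the lower bound $d(x,N)\ge\delta$ for an off-net point $x$, which is precisely what forces $\mathcal F_N(M)$ to be an $\ell_1$-space rather than something more complicated. It is worth emphasizing that the argument uses only that $\mathcal F(N)$ contains a \emph{complemented} copy of $\ell_1(\Gamma)$, and not any self-similarity ($\ell_1$-sum) of $\mathcal F(N)$ itself; this is consistent with the fact that the latter property is established only later in the paper and only in special cases.
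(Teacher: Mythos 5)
Your proposal is correct and follows essentially the same route as the paper's own proof: construct a Lipschitz nearest-point retraction $r\colon M\to N$ (Lipschitz because $M$ is uniformly discrete) and apply Proposition \ref{ret}, identify $\mathcal F_N(M)\cong\ell_1(M\setminus N)$, and then absorb this summand into the complemented copy of $\ell_1(\Gamma)$ in $\mathcal F(N)$ given by Proposition \ref{l-1}. The only difference is that you supply the details (the quotient-metric bounds making $M/N$ bounded and uniformly discrete, and the comparison with an equilateral space) that the paper compresses into ``it is clear that $\mathcal F_N(M)\cong\ell_1(M\setminus N)$.''
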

\begin{proof}
Let $K>0$ be such that $\max_{m\in M}\dist(m,N)\le K$
Let $r:M\to N$ be a retraction such that $d(x, r(x))\le K$.
As $M$ is uniformly discrete, $r$ is Lipschitz.
By Proposition \ref{ret}

\[
\mathcal F(M)\cong \mathcal F(N)\oplus\mathcal F_N(M).
\]

It is clear that $\mathcal F_N(M)\cong\ell_1(M\setminus N)$.
By Proposition \ref{l-1}

\[
\mathcal F(N)\cong \mathcal F(N)\oplus\ell_1(M)\cong\mathcal F(M).
\]

\end{proof}

Recall that all nets in a given infinite dimensional Banach space are
Lipschitz equivalent (\cite{LMP}, or \cite{benlin} p.239),
hence their free spaces are linearly isomorphic. On the other hand,
there are examples of non-equivalent nets in $\R^2$ (\cite{Mc}, \cite{BuK}
or \cite{benlin} p.242), hence the next result is not immediately obvious.

\begin{proposition}\label{sam-f}
Let  $\mathcal N$, $\m$ be nets of the same cardinality $\text{dens}(M)$
in a metric space  $(M,d)$. Then
$\mathcal F(\mathcal N)\cong\mathcal F(\m)$.
\end{proposition}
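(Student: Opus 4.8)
The plan is to reduce everything to Theorem \ref{eq-n}, which already asserts that a uniformly discrete net inside a uniformly discrete ambient set has an isomorphic free space. The naive attempt would be to place both nets inside the common ambient set $\mathcal N\cup\mathcal M$, in which each is clearly a net: the $\beta$-denseness of $\mathcal N$ (resp.\ $\mathcal M$) in $M$ is inherited, and each is uniformly discrete on its own. The main obstacle, however, is that $\mathcal N\cup\mathcal M$ need \emph{not} be uniformly discrete, since a point of $\mathcal N$ and a point of $\mathcal M$ may be arbitrarily close; thus Theorem \ref{eq-n} cannot be applied to this union. Indeed, the corresponding quotient $\mathcal F_{\mathcal N}(\mathcal N\cup\mathcal M)$ is in general not an $\ell_1$-space — it can carry the full metric structure of a genuine net — so the argument used for Theorem \ref{eq-n} does not transfer verbatim.

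To circumvent this I would separate the two nets artificially into parallel slabs. Consider $\widetilde M=M\times\{0,1\}$ with the $\ell_1$-type product metric
\[
\widetilde d\bigl((x,i),(y,j)\bigr)=d(x,y)+|i-j|,
\]
so that $M\times\{0\}$ and $M\times\{1\}$ are isometric copies of $M$, and set $\mathcal P=(\mathcal N\times\{0\})\cup(\mathcal M\times\{1\})$. If $\alpha_1,\alpha_2$ denote the separation constants of $\mathcal N,\mathcal M$, then inside each slab $\mathcal P$ is $\alpha_1$- resp.\ $\alpha_2$-separated, while any two points in different slabs are at distance at least $1$; hence $\mathcal P$ is uniformly discrete with separation constant $\min\{\alpha_1,\alpha_2,1\}$. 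Crucially, the slab separation is only the \emph{fixed} constant $1$, so it does not destroy the covering property.

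The key verification is then that $\mathcal N\times\{0\}$ is a net in $\mathcal P$. It is uniformly discrete, and every point $(m,1)\in\mathcal P$ satisfies $\widetilde d\bigl((m,1),\mathcal N\times\{0\}\bigr)=1+d(m,\mathcal N)\le 1+\beta_1$, which is bounded, so the covering requirement holds. Assuming $M$ infinite (the finite case being trivial), $\mathcal N\times\{0\}$ and $\mathcal P$ are infinite of the same cardinality $\dens(M)=\Gamma$, since $|\mathcal P|=\Gamma+\Gamma=\Gamma$. Theorem \ref{eq-n} therefore gives $\mathcal F(\mathcal N)\cong\mathcal F(\mathcal N\times\{0\})\cong\mathcal F(\mathcal P)$. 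By the symmetric argument $\mathcal M\times\{1\}$ is also a net in $\mathcal P$, whence $\mathcal F(\mathcal M)\cong\mathcal F(\mathcal P)$, and combining the two isomorphisms yields $\mathcal F(\mathcal N)\cong\mathcal F(\mathcal M)$.

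The only bookkeeping point I foresee is the choice of base point: the two applications of Theorem \ref{eq-n} naturally use base points lying in different slabs of $\mathcal P$. This is harmless, since the free space of a pointed metric space does not depend, up to isometry, on the chosen base point, so both applications compute the same space $\mathcal F(\mathcal P)$ and the conclusion follows.
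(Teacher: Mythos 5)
Your proof is correct, and it reaches the conclusion by a genuinely different device, although the key external ingredient is the same: both you and the authors reduce to Theorem \ref{eq-n} by exhibiting a single uniformly discrete set that contains (copies of) $\mathcal N$ and $\m$ as nets of the same cardinality, and then applying that theorem twice. The difference lies in how the common superset is built. The paper works inside $M$ itself: it takes the union $\mathcal N\cup\m$ and repairs its possible failure of uniform discreteness by discarding one point from every pair $x\in\m$, $y\in\mathcal N$ with $d(x,y)<a/4$, obtaining a uniformly discrete set $\mathcal K$; this forces it to invoke a bi-Lipschitz identification (each net is bi-Lipschitz equivalent to its ``surviving image'' inside $\mathcal K$, points being moved by less than $a/4$), a step the paper leaves as ``clear''. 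You instead make the collision problem disappear by construction, separating the two nets into parallel slabs of the product $M\times\{0,1\}$ with the $\ell_1$-type metric: uniform discreteness of $\mathcal P=(\mathcal N\times\{0\})\cup(\m\times\{1\})$ is then automatic, and the verification that each slab copy is a net in $\mathcal P$ is a one-line computation. The price is leaving the ambient space $M$ and invoking the standard facts that free spaces are invariant under isometries of the underlying metric space and under change of base point---both well known, and your closing remark about base points is exactly the right thing to check, since your two applications of Theorem \ref{eq-n} use base points in different slabs. In summary, both routes are sound: yours trades the paper's deletion-plus-perturbation argument for an auxiliary product construction, which makes every verification mechanical and avoids the unproved bi-Lipschitz step, while the paper's argument has the mild advantage of never leaving the original space $M$.
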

\begin{proof}
Suppose $\mathcal N$ is a $(a,b)$-net and $\m$ is a $(c,d)$-net in $M$, $a\le c$.
Let $K=\m\cup\mathcal N$, and let $\mathcal K\subset K$ be maximal
subset such that from each pair of points $x\in\m, y\in\mathcal N$
for which $d(x,y)<\frac a4$ we choose only one $x\in\mathcal K$.
It is now clear that both $\mathcal N$ and $\m$ are bi-Lipschitz
equivalent to a respective subset of $\mathcal K$. By Theorem \ref{eq-n},
$\f(\mathcal K)\cong\f(\m)\cong\mathcal F(\mathcal N)$.
\end{proof}

Of course, the above proposition applies to any pair of nets in a given
Banach space $X$,
or its subset $S\subset X$ which contains arbitrarily large balls.

\begin{lemma}\label{plus}
Let $Y=X\oplus\R$ be Banach spaces, $\mathcal N$ be a net in $X$
and $\m$ be the extension of $\mathcal N$ into the natural net
in $Y$. Denote $\m^+=\m\cap X\oplus\R^+$,  $\m^-=\m\cap X\oplus\R^-$.

If $\mathcal F(\mathcal N)=\f(\mathcal N)\oplus\f(\mathcal N)$ and
$\f(\m^+)=\f(\m^+)\oplus\f(\m^+)$
then $\f(\m)=\f(\m^+)=\f(\m)\oplus\f(\m)$.
\end{lemma}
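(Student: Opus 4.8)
The plan is to realise both $\f(\m)$ and $\f(\m^+)$ as explicit finite direct sums built from the two ``building blocks''
\[
A:=\f(\mathcal N),\qquad P:=\f_N(\m^+),\qquad N:=\m\cap(X\oplus\{0\})=\mathcal N\times\{0\}\cong\mathcal N,
\]
and then to finish by a formal direct-sum manipulation in which the two hypotheses are used precisely to absorb one superfluous copy of $P$. Throughout I identify the natural net as $\m=\mathcal N\times\delta\Z$ for the appropriate spacing $\delta$, so that $\m^+=\mathcal N\times\delta\Z^{\ge 0}$, $\m^-=\mathcal N\times\delta\Z^{\le 0}$, and $\m^+\cap\m^-=N$.

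First I would record three elementary maps on these nets. (i) The coordinate projection $(x,t)\mapsto(x,0)$ is a $1$-Lipschitz retraction of each of $\m,\m^+,\m^-$ onto $N$, so Proposition \ref{ret} yields
\[
\f(\m)\cong A\oplus\f_N(\m),\qquad \f(\m^{\pm})\cong A\oplus\f_N(\m^{\pm}).
\]
(ii) The reflection $(x,t)\mapsto(x,-t)$ is an isometry of $\m$ carrying $\m^+$ onto $\m^-$ and fixing $N$ pointwise; it therefore induces $\f_N(\m^+)\cong\f_N(\m^-)$, so both equal $P$. (iii) For the splitting of $\f_N(\m)$ I would invoke Proposition \ref{l-1-sum} with base $N$ and pieces $M_+=\m^+\setminus N$, $M_-=\m^-\setminus N$. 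For a point $x=(x_0,k\delta)$ with $k\ge 1$ the aligned product structure makes $(x_0,0)\in N$ the nearest base point, so $d(x,N)=k\delta$, while every point of $M_-$ sits at $\R$-height $\le-\delta$, whence $d(x,M_-)\ge(k+1)\delta>d(x,N)$; the symmetric estimate holds on $M_-$, so the hypothesis of Proposition \ref{l-1-sum} is satisfied with $K=1$. This gives $\f_N(\m)\cong\f_N(\m^+)\oplus_{\ell_1}\f_N(\m^-)\cong P\oplus P$.

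Combining (i)--(iii) produces the two decompositions
\[
\f(\m^+)\cong A\oplus P,\qquad \f(\m)\cong A\oplus P\oplus P .
\]
The conclusion is then formal. The assumption $\f(\mathcal N)\cong\f(\mathcal N)\oplus\f(\mathcal N)$ reads $A\cong A\oplus A$, and $\f(\m^+)\cong\f(\m^+)\oplus\f(\m^+)$ reads $A\oplus P\cong(A\oplus A)\oplus(P\oplus P)$. Hence
\[
\f(\m)\cong A\oplus(P\oplus P)\cong(A\oplus A)\oplus(P\oplus P)\cong A\oplus P\cong\f(\m^+),
\]
where the second isomorphism uses $A\cong A\oplus A$ and the third uses the hypothesis on $\f(\m^+)$. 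Finally $\f(\m)\cong\f(\m^+)\cong\f(\m^+)\oplus\f(\m^+)\cong\f(\m)\oplus\f(\m)$, once more by the hypothesis on $\f(\m^+)$.

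The algebra of the last step is harmless; the only places needing genuine care are the metric verifications in (iii) — namely that the aligned product net forces the nearest base point of $(x_0,k\delta)$ to be $(x_0,0)$, giving $d(x,N)=k\delta$ and the separation inequality $d(x,M_\mp)\ge d(x,N)$ (this is where monotonicity of the norm on $Y=X\oplus\R$ enters) — together with the bookkeeping identifications $\f_N(N\cup M_\pm)=\f_N(\m^\pm)$ and $N\cup M_+\cup M_-=\m$. I expect this geometric verification to be the main, though not serious, obstacle: once the two displayed decompositions are secured, the two hypotheses are exactly what is needed to collapse the extra summand $P$.
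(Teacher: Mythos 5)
Your proposal is correct and follows essentially the same route as the paper's own proof: the paper likewise normalizes $\m=\mathcal N\times\Z$ (via Proposition \ref{sam-f}), uses the coordinate projection as a Lipschitz retraction together with Proposition \ref{l-1-sum} to obtain $\f(\m^+)\cong\f(\mathcal N)\oplus\f_{\mathcal N}(\mathcal N\cup\m^+)$ and $\f(\m)\cong\f(\mathcal N)\oplus\f_{\mathcal N}(\mathcal N\cup\m^+)\oplus\f_{\mathcal N}(\mathcal N\cup\m^-)$, and then finishes with the same absorption algebra based on the reflection symmetry and the two hypotheses. The only cosmetic difference is that the paper takes $\m^+$ strictly positive and identifies $\f(\m^+)\cong\f(\mathcal N\cup\m^+)$ by a translation, whereas you include the zero level in $\m^+$ from the start.
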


\begin{proof}

Thanks to Proposition \ref{sam-f} we are allowed to make additional assumptions
on the form of the nets.
Let us assume that $\m=\mathcal N\times\Z$, which immediately implies that
$\mathcal N\cup\m^+$ is bi-Lipschitz equivalent with $\m^+$ (and $\m^-$) by translation.
Denoting $P:Y\to X$ the canonical projection
$P(x,t)=x$, we see that $P:\m\to\mathcal N$ is a Lipschitz retraction, so
\[
\f(\m^+)\cong\f(\mathcal N\cup\m^+)\cong
\f(\mathcal N)\oplus\f_{\mathcal N}(\mathcal N\cup\m^+)
\]
and using Proposition \ref{l-1-sum}
\[
\f(\m)\cong\f(\mathcal N)\oplus\f_{\mathcal N}(\m)\cong
\f(\mathcal N)\oplus\f_{\mathcal N}(\mathcal N\cup\m^+)
\oplus\f_{\mathcal N}(\mathcal N\cup\m^-)
\]

Since $\f_{\mathcal N}(\mathcal N\cup\m^+)
\cong\f_{\mathcal N}(\mathcal N\cup\m^-)$
and
$\f({\mathcal N})\cong\f({\mathcal N})\oplus\f({\mathcal N})$ the result
follows.

\end{proof}

\begin{theorem}
Let ${\mathcal N}$ be a net in $\R^n$. Then
$\f({\mathcal N})\cong\f({\mathcal N})\oplus\f({\mathcal N})$.
\end{theorem}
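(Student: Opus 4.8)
The plan is to argue by induction on $n$, strengthening the statement so that the inductive hypothesis is strong enough to feed Lemma \ref{plus}. Concretely, I would prove simultaneously the two assertions
\[
P(n):\ \f(\Z^n)\cong\f(\Z^n)\oplus\f(\Z^n),\qquad S(n):\ \f(\N^n)\cong\f(\N^n)\oplus\f(\N^n),
\]
where $\N^n$ denotes the canonical net in the orthant $[0,\infty)^n$; by Proposition \ref{sam-f} it is harmless to replace an arbitrary net by these product nets. For $n=1$ both free spaces are isometric to $\ell_1$ (finite-difference basis) and $\ell_1\cong\ell_1\oplus\ell_1$, so the base case is immediate.

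The engine of the induction is a mild generalization of Lemma \ref{plus}, stated for a net $\mathcal B$ in any region $\Omega$ containing arbitrarily large balls: if $\f(\mathcal B)\cong\f(\mathcal B)\oplus\f(\mathcal B)$, then $\f(\mathcal B\times\Z)\cong\f(\mathcal B\times\N)\oplus\f(\mathcal B\times\N)$. The proof is the computation already hidden inside Lemma \ref{plus}: retract $\mathcal B\times\Z$ onto the central slice $\mathcal B\times\{0\}$ via Proposition \ref{ret}, split the upper and lower halves by Proposition \ref{l-1-sum}, identify the two halves with $\f_{\mathcal B}(\mathcal B\times\N)$ by reflection, and absorb the two surplus copies of $\f(\mathcal B)$ using the splitting hypothesis on $\mathcal B$. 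I would apply this once with $\Omega=\R^{n-1}$, $\mathcal B=\Z^{n-1}$ (legitimate by $P(n-1)$), obtaining
\[
\f(\Z^n)\cong\f(\Z^{n-1}\times\N)\oplus\f(\Z^{n-1}\times\N),
\]
and once with $\Omega=[0,\infty)^{n-1}$, $\mathcal B=\N^{n-1}$ (legitimate by $S(n-1)$), obtaining
\[
\f(\N^{n-1}\times\Z)\cong\f(\N^n)\oplus\f(\N^n).
\]

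The remaining point is purely metric: the \emph{degenerate} cones $C_k=\R^{n-k}\times[0,\infty)^k$ are mutually bi-Lipschitz for $1\le k\le n$. It suffices to pass from $C_k$ to $C_{k+1}$ by a map acting as the identity on all but one full and one half coordinate, and on that coordinate pair acting as the planar fold sending the half-plane $\R\times[0,\infty)$ onto the quadrant $[0,\infty)^2$ by halving the polar angle, $(r,\theta)\mapsto(r,\theta/2)$; the identity $|p-q|^2=(r_1-r_2)^2+4r_1r_2\sin^2(\tfrac{\theta}{2})$ shows this fold is bi-Lipschitz, and a product of bi-Lipschitz maps on orthogonal coordinate blocks is bi-Lipschitz. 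Hence the nets $\Z^{n-1}\times\N$ ($k=1$), $\N^{n-1}\times\Z$ ($k=n-1$) and $\N^n$ ($k=n$) all share one free space, say $H_n$, by functoriality of $\f$ under bi-Lipschitz maps and Proposition \ref{sam-f}. The second displayed identity then reads $H_n\cong H_n\oplus H_n$, which is $S(n)$; the first reads $\f(\Z^n)\cong H_n\oplus H_n\cong H_n$, whence, using $S(n)$, $\f(\Z^n)\cong H_n\cong H_n\oplus H_n\cong\f(\Z^n)\oplus\f(\Z^n)$, which is $P(n)$. This closes the induction, and $P(n)$ with Proposition \ref{sam-f} gives the theorem for an arbitrary net.

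The main obstacle is precisely the half-space hypothesis of Lemma \ref{plus}: unlike the full-space factor it is not supplied by the inductive hypothesis in one lower dimension, and the natural reduction drives one down to the orthant, which has no full coordinate to reflect. The device that breaks this deadlock is the bi-Lipschitz equivalence of the cones $C_k$, which in effect trades the sharp orthant for a half-space, and whose only genuinely geometric ingredient is the elementary planar angle-folding map. I would still take care to check the separation hypothesis of Proposition \ref{l-1-sum} in the generalized lemma (it holds with constant $K=1$ for the product nets), and to note that the half-space and the full space are themselves \emph{not} bi-Lipschitz, so that the cone identification really is indispensable.
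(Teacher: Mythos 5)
Your proof is correct, and although it runs on the same engines as the paper's --- induction on dimension, the slice-and-reflect computation hidden in Lemma \ref{plus} (Proposition \ref{ret} plus Proposition \ref{l-1-sum} plus a reflection), the planar angle fold (your angle-halving map is exactly the inverse of the paper's $R(z)=z^2/|z|$), and Proposition \ref{sam-f} to move between nets --- the inductive architecture is genuinely different. The paper carries only the full-grid statement $P(n)$ through the induction, so inside each step it must manufacture the second hypothesis of Lemma \ref{plus}, namely that the half-space net $\m^+$ satisfies $\f(\m^+)\cong\f(\m^+)\oplus\f(\m^+)$; it does this by decomposing the quarter- and half-space nets relative to their boundary grids, unfolding the L-shaped boundary $\mathcal L\cup\mathcal L_1\cup\mathcal L_2$ onto the hyperplane grid $\mathcal L\cup\mathcal L_1\cup\mathcal L_3$, and combining the resulting identities \eqref{ilo}, \eqref{hve}, \eqref{kri}, \eqref{l-ji} in Pelczynski style. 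You instead strengthen the induction to carry the orthant statement $S(n)$ alongside $P(n)$, and iterate the fold to make all the degenerate cones $\R^{n-k}\times[0,\infty)^k$ bi-Lipschitz equivalent, so that your two applications of the generalized Lemma \ref{plus} close on themselves with essentially no relative free-space bookkeeping. The trade-off: the paper can quote its Lemma \ref{plus} as stated (for Banach spaces $Y=X\oplus\R$), whereas you must re-prove it for products $\mathcal B\times\Z$ with $\mathcal B$ a net in a cone --- harmless, since the proof is verbatim, and you correctly flag both this and the $K=1$ separation check needed for Proposition \ref{l-1-sum}. In return you excise the paper's most technical passage, the chains of decompositions of the form $\f_{\mathcal L\cup\mathcal L_1\cup\mathcal L_2}(\cdots)$: your single geometric observation that folding identifies the orthant, the half-space, and everything in between does that work all at once.
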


\begin{proof}

For $n=1$ it is well known \cite{Gd} that
$\f({\mathcal N})\cong\f({\mathcal N}^+)\cong\ell_1\cong
 \f(\mathcal N)\oplus\f(\mathcal N)$.

Inductive step towards $n+1$. We may assume that $\mathcal N=\Z^{n+1}$
is the integer grid. Let us use the following notation (our convention
is that $\Z^+=\{1,2,3,\dots\}, \Z^-=\{-1,-2,\dots\}$).
\[
\mathcal L=\Z^{n-1}\times\{0\}\times\{0\},\;
\mathcal L_1=\Z^{n-1}\times\Z^+\times\{0\},\;
\mathcal L_2=\Z^{n-1}\times\{0\}\times\Z^+,\;
\mathcal L_3=\Z^{n-1}\times\Z^-\times\{0\}
\]

\[
\mathcal M^+=\Z^{n-1}\times\Z\times\Z^+,\;
\mathcal M_1=\Z^{n-1}\times\Z^+\times\Z^+,\;
\mathcal M_2=\Z^{n-1}\times\Z^-\times\Z^+
\]

With this notation, we have the following bi-Lipschitz equivalence

\[
\mathcal L_1\cup\mathcal L\cup\mathcal L_2\cong
\mathcal L_1\cup\mathcal L\cup\mathcal L_3.
\]

By inductive assumption this implies

\begin{equation}\label{ilo}
\mathcal F(\mathcal L\cup\mathcal L_1\cup\mathcal L_2)
\cong
\mathcal F(\mathcal L\cup\mathcal L_1\cup\mathcal L_2)
\oplus
\mathcal F(\mathcal L\cup\mathcal L_1\cup\mathcal L_2).
\end{equation}

On the other hand, using Proposition \ref{l-1-sum} in various settings

\[
\mathcal F(\mathcal L\cup\mathcal L_1\cup\mathcal L_2)
\cong
\mathcal F(\mathcal L)\oplus
\mathcal F_{\mathcal L}(\mathcal L\cup\mathcal L_1)\oplus
\mathcal F_{\mathcal L}(\mathcal L\cup\mathcal L_2),
\]

\begin{equation}\label{hve}
\mathcal F(\mathcal L\cup\mathcal L_1\cup\mathcal L_2)
\cong
\mathcal F(\mathcal L\cup\mathcal L_1)
\oplus \mathcal F_{\mathcal L\cup\mathcal L_1}
(\mathcal L\cup\mathcal L_1\cup\mathcal L_2)\cong
\mathcal F(\mathcal L\cup\mathcal L_1)
\oplus \mathcal F_{\mathcal L}
(\mathcal L\cup\mathcal L_2),
\end{equation}

\[
\mathcal F(\mathcal L\cup\mathcal L_1\cup\mathcal L_2\cup\mathcal L_3)
\cong
\mathcal F(\mathcal L\cup\mathcal L_1\cup\mathcal L_3)
\oplus \mathcal F_{\mathcal L\cup\mathcal L_1\cup\mathcal L_3}
(\mathcal L\cup\mathcal L_1\cup\mathcal L_2\cup\mathcal L_3)\cong
\mathcal F(\mathcal L\cup\mathcal L_1\cup\mathcal L_3)
\oplus \mathcal F_{\mathcal L}
(\mathcal L\cup\mathcal L_2).
\]

Hence, using the inductive assumption
$\mathcal F(\mathcal L\cup\mathcal L_1\cup\mathcal L_3)\cong
\mathcal F(\mathcal L\cup\mathcal L_1)$

\begin{equation}\label{kri}
\mathcal F(\mathcal L\cup\mathcal L_1\cup\mathcal L_2\cup\mathcal L_3)
\cong
\mathcal F(\mathcal L\cup\mathcal L_1)
\oplus \mathcal F_{\mathcal L}
(\mathcal L\cup\mathcal L_2)
\end{equation}

Comparing \eqref{hve}, \eqref{kri} and using \eqref{ilo} we obtain

\begin{equation}\label{l-ji}
\mathcal F(\mathcal L\cup\mathcal L_1\cup\mathcal L_2\cup\mathcal L_3)
\cong
\mathcal F(\mathcal L\cup\mathcal L_1\cup\mathcal L_2)
\cong
\mathcal F(\mathcal L\cup\mathcal L_1\cup\mathcal L_2)
\oplus
\mathcal F(\mathcal L\cup\mathcal L_1\cup\mathcal L_2)
\end{equation}

By Lemma \ref{plus},
in order to complete the inductive step, it suffices to prove that
$\f(\m^+)=\f(\m^+)\oplus\f(\m^+)$.

Denote $R:\R^+\times\R^+\to\R\times\R^+$ the mapping $R(z)=\frac{z^2}{|z|}$,
where $z$ is the complex number represented as $z=x+iy$. It is clear that
$R$ is bi-Lipschitz. Indeed, if $z_0=a+ib$ and $z_1=x+iy$ are two complex numbers from the first quadrant with $a\leq x$, then
\begin{align*}
|R(z_0)-R(z_1)|&=|e^{a+2ib}-e^{x+2iy}|\leq |e^{a+2ib}-e^{a+2iy}|+|e^{a+2iy}-e^{x+2iy}|=\\
&=e^a|e^{ib}-e^{iy}|\cdot|e^{ib}+e^{iy}|+|e^a-e^x|\leq 2|e^{a+ib}-e^{a+iy}| + |e^{a+ib}-e^{x+iy}|\\
&\leq 2|e^{a+ib}-e^{x+iy}|+|e^{a+ib}-e^{x+iy}|=3|z_0-z_1|.
\end{align*}
On the other hand, for any $z_0=a+ib$ and $z_1=x+iy$ from the upper half plane with $a\leq x$ we have
\begin{align*}
|R^{-1}(z_0)-R^{-1}(z_1)|&=|e^{a+\frac{ib}{2}}-e^{x+\frac{iy}{2}}|\leq |e^{a+\frac{ib}{2}}-e^{a+\frac{iy}{2}}|+|e^{a+\frac{iy}{2}}-e^{x+\frac{iy}{2}}|=\\
&=e^a\frac{|e^{ib}-e^{iy}|}{|e^{\frac{ib}{2}}+e^{\frac{iy}{2}}|}+|e^a-e^x|\\
&\leq \frac{\sqrt{2}}{2}|e^{a+ib}-e^{a+iy}| + |e^{a+ib}-e^{x+iy}|\\
&\leq 2|z_0-z_1|,
\end{align*}
which we wanted to prove.

The mapping
\[
T:\m_1\to \R^{n+1},
T(u,x,y)=(u,R(x,y))
\]
takes the net $\m_1$ from the set $\R^{n-1}\times\R^+\times\R^+$
in a bi-Lipschitz way
to the net $T(\m_1)$ in the set $\R^{n-1}\times\R\times\R^+$.
Hence $\f(\m_1)\cong\f(T(\m_1))$. Since $\m^+=\m_1\cup\mathcal L_2\cup\m_2$
is another net in the second set, by Proposition \ref{sam-f} we obtain

\[
\f(\m_1)\cong\f(\m^+)
\]

Now thanks to the bi-Lipschitz equivalence
$\m_1\cong\m_1\cup\mathcal L\cup\mathcal L_1\cup\mathcal L_2$,

\[
\f(\m_1)\cong\f(\m_1\cup\mathcal L\cup\mathcal L_1\cup\mathcal L_2)\cong
\f(\mathcal L\cup\mathcal L_1\cup\mathcal L_2)\oplus
\f_{\mathcal L\cup\mathcal L_1\cup\mathcal L_2}
(\m_1\cup\mathcal L\cup\mathcal L_1\cup\mathcal L_2)
\]

Since $\m^+$ is bi-Lipschitz equivalent to
$\m^+\cup\mathcal L\cup\mathcal L_1\cup\mathcal L_2$ we get

\[
\f(\m^+)\cong
\f(\mathcal L\cup\mathcal L_1\cup\mathcal L_2\cup\mathcal L_3)\oplus
\f_{\mathcal L\cup\mathcal L_1\cup\mathcal L_2}
(\m_1\cup\mathcal L\cup\mathcal L_1\cup\mathcal L_2)
\oplus
\f_{\mathcal L\cup\mathcal L_2\cup\mathcal L_3}
(\m_2\cup\mathcal L\cup\mathcal L_1\cup\mathcal L_3)
\]

Using \eqref{l-ji} and the obvious
$\f_{\mathcal L\cup\mathcal L_1\cup\mathcal L_2}
(\m_1\cup\mathcal L\cup\mathcal L_1\cup\mathcal L_2)
\cong
\f_{\mathcal L\cup\mathcal L_2\cup\mathcal L_3}
(\m_2\cup\mathcal L\cup\mathcal L_1\cup\mathcal L_3)$

we finally obtain

\[
\f(\m_1)\oplus\f(\m_1)\cong\f(\m^+)\cong\f(\m_1)
\]

which ends the inductive step and the proof.
\end{proof}

\begin{theorem}\label{pol-l}
Let $X$ be a Banach space such that
$X\cong Y\oplus X$, where
$Y$ is an infinite dimensional Banach space with a Schauder  basis.
Let ${\mathcal N}$ be a net in $X$. Then
\[
\mathcal F({\mathcal N})\cong(\oplus_{j=1}^\infty F({\mathcal N}))_{\ell_1}.
\]
\end{theorem}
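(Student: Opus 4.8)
The plan is to exhibit $\mathcal F(\mathcal N)$ directly as an $\ell_1$-sum of copies of itself, by locating inside a conveniently chosen net a distinguished ``core'' together with countably many pairwise well-separated pieces, and then invoking Proposition \ref{l-1-sum}. Throughout I would use Proposition \ref{sam-f} freely, so that $\mathcal F(\mathcal N)$ may be computed on any net in $X$ that is convenient. Since $X\cong Y\oplus X$, I would fix a product net $\mathcal N_Y\times\mathcal N_X$ in $Y\oplus X$, where $\mathcal N_Y,\mathcal N_X$ are nets in $Y,X$ with $0\in\mathcal N_Y$. The linear projection $(y,x)\mapsto(0,x)$ is a Lipschitz retraction of $Y\oplus X$ onto $X$, hence it restricts to a Lipschitz retraction of the net onto $N:=\{0\}\times\mathcal N_X$. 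By Proposition \ref{ret},
\[
\mathcal F(\mathcal N)\cong\mathcal F(N)\oplus\mathcal F_N(\mathcal N),
\]
and $\mathcal F(N)\cong\mathcal F(\mathcal N)$ because $N$ is a net in $X$. Thus it suffices to decompose $\mathcal F_N(\mathcal N)$ as an $\ell_1$-sum of copies of $\mathcal F(\mathcal N)$.

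Next I would produce the separated pieces using the infinite dimensionality of $Y$. Since $Y$ is infinite dimensional it contains a uniformly separated sequence of unit vectors $\{v_j\}$, $\inf_{i\neq j}\|v_i-v_j\|\ge\delta_0>0$ (Riesz's lemma, or Elton--Odell). Around each $v_j$ I would take a solid cone $C_j\subset Y$ of fixed small opening, chosen so that distinct cones are separated by a definite angular gap, and set $M_j=(C_j\cap\mathcal N_Y)\times\mathcal N_X$; the net points of $Y$ lying in none of the cones (a neighbourhood of the origin together with the gaps) I would absorb into the core. For $p=(y,x)\in M_j$ one has $d(p,N)\approx\|y\|_Y$, while the angular gap forces $d(p,\bigcup_{i\neq j}M_i)\gtrsim\delta_0\|y\|_Y$, so the hypothesis of Proposition \ref{l-1-sum} holds with a uniform constant $K$, giving
\[
\mathcal F_N(\mathcal N)\cong\Big(\bigoplus_{j=1}^\infty\mathcal F_N(N\cup M_j)\Big)_{\ell_1}.
\]

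It then remains to identify each summand. Each $N\cup M_j$ is essentially a net in $\overline{C_j}\times X$, and the heart of the argument is that $\mathcal F_N(N\cup M_j)\cong\mathcal F(\mathcal N)$. Here I would use that a solid cone in $Y$ is bi-Lipschitz to a half-space of $Y$, and that a half-space net reproduces the whole net; this is exactly where the hypotheses enter, the self-similarity $X\cong Y\oplus X$ permitting a whole copy of $Y$ to be added back, and the Schauder basis of $Y$ permitting the required bi-Lipschitz angle-doubling map to be built on a two-dimensional coordinate slice, as the map $R$ was used in the previous theorem (the case of $\R^n$). Granting this, Proposition \ref{sam-f}, Proposition \ref{ret} and Lemma \ref{plus} collapse each $\mathcal F_N(N\cup M_j)$ to $\mathcal F(\mathcal N)$, and assembling the three displays yields
\[
\mathcal F(\mathcal N)\cong\mathcal F(\mathcal N)\oplus\Big(\bigoplus_{j=1}^\infty\mathcal F(\mathcal N)\Big)_{\ell_1}\cong\Big(\bigoplus_{j=1}^\infty\mathcal F(\mathcal N)\Big)_{\ell_1}.
\]

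I expect the main obstacle to be precisely this last identification: controlling the infinite-dimensional geometry so that each separated cone reproduces the free space of the whole net. The delicate points are to choose the cones with uniform angular gaps and uniform retraction constants, so that the constant $K$ in Proposition \ref{l-1-sum} does not degenerate as $j\to\infty$, and to check that the leftover net absorbed into the core does not disturb $\mathcal F(N)\cong\mathcal F(\mathcal N)$ nor the existence of the retraction onto the enlarged core. If these geometric estimates prove too rigid, an alternative would be to peel a single basis direction $e_1$ of $Y$ to write the net as $\Z\times\mathcal N'$ and to iterate Lemma \ref{plus} together with a radial remapping producing infinitely many separated self-similar slabs, but I regard the cone construction above as the more transparent route.
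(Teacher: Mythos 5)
Your overall architecture (well-separated pieces plus Proposition \ref{l-1-sum}, followed by an identification of each piece) is in the same spirit as the paper's, but the step you yourself flag as the main obstacle --- identifying each summand $\mathcal F_N(N\cup M_j)$ with $\mathcal F(\mathcal N)$ --- is a genuine gap, and the tools you invoke cannot close it. Lemma \ref{plus} is not applicable: its hypotheses are $\mathcal F(\mathcal N)\cong\mathcal F(\mathcal N)\oplus\mathcal F(\mathcal N)$ and $\mathcal F(\mathcal M^+)\cong\mathcal F(\mathcal M^+)\oplus\mathcal F(\mathcal M^+)$, i.e.\ exactly the kind of self-similarity statement you are trying to prove, so invoking it here is circular. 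In the finite-dimensional theorem those hypotheses are supplied by an induction on dimension (with $\mathcal F(\Z)\cong\ell_1$ as the base), and the angle-doubling map $R$ is applied to a two-dimensional coordinate slice of a \emph{product} cone $\Z^{n-1}\times\Z^+\times\Z^+$, not to a round cone of small opening; in the infinite-dimensional setting there is no such induction to start from, and your claims that a solid cone in $Y$ is bi-Lipschitz to a half-space and that a half-space net reproduces the whole net are left unproven (the second is essentially what Lemma \ref{plus} would have to deliver). The paper's introduction warns about precisely this point: the discrete setting prohibits scaling arguments, which is why its infinite-dimensional proof is structured completely differently from the finite-dimensional one.

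There is also a structural defect earlier in your argument. Proposition \ref{l-1-sum} computes $\mathcal F_N(N\cup\bigcup_j M_j)$, not $\mathcal F_N(\mathcal N)$: your cones do not cover the net, and the leftover points cannot be absorbed into the core for free. With an enlarged core $N'$ you lose both ingredients of your first display: $N'$ is no longer a net in $Y\oplus X$ (it omits the cones, which contain arbitrarily large balls), so Proposition \ref{sam-f} no longer yields $\mathcal F(N')\cong\mathcal F(\mathcal N)$, and there is no evident Lipschitz retraction of the net onto $N'$, so Proposition \ref{ret} no longer yields the splitting. The paper sidesteps both difficulties simultaneously: it takes pieces $V_n=\{ke_n\oplus x:\ x\in M_k,\ k\in\N\}$ with $M_k\subset kS_X$, which under the max norm are \emph{isometric} to $\mathcal N=\bigcup_k M_k$ (since $\|x-x'\|\ge|k-k'|$ forces $\|ke_n\oplus x-k'e_n\oplus x'\|=\|x-x'\|$), so the identification of the summands is automatic; and rather than decomposing all of $\mathcal F(\mathcal N)$, it runs a Pelczynski decomposition argument, for which it suffices to show that $\mathcal F(M)$, $M=\bigcup_n V_n$, is complemented in $\mathcal F(\mathcal M)$ for a full net $\mathcal M\supset M$. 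That complementation is obtained from an explicit Lipschitz retraction $R:\mathcal M\to M$, built from the discontinuous maps $r$ and $s$ with a delicate case analysis; this retraction is the real content of the paper's proof and has no counterpart in your proposal.
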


\begin{proof}
We may assume without loss of generality that the norm of the direct sum $Y\oplus X$
is in fact equal to the maximum norm $Y\oplus_\infty X$.
Using Proposition \ref{sam-f} it suffices to
prove the result for just one particular net $\mathcal N$.
Let $M_k\subset k S_X, k\in\N$ be a $(1,2)$-net. Then
${\mathcal N}=\cup_{k=1}^\infty  M_k$ is a $(1,3)$-net
in $X$. Let $\{e_k\}$ be a bi-monotone normalized Schauder basis of $Y$.
Set $Z=(\oplus_{j=1}^\infty F({\mathcal N}))_{\ell_1}$.
It is clear that $Z\cong(\oplus_{j=1}^\infty Z)_{\ell_1}$

We will use Pelczynski's decomposition technique to
prove the theorem.
Since $\f(\mathcal N)$ is complemented in $Z$
it only remains to prove that $\f(\mathcal N)$ contains a complemented
subspace isomorphic to $Z$. Let

\[
V_n=\{k e_n\oplus x: x\in M_k, k\in\N\}\subset
Y\oplus X
\]
\[
M=\cup_{n=1}^\infty V_n
\]

The sets $V_n$, as subsets of the pointed metric space
$(Y\oplus X,\|\cdot\|,0)$, satisfy the assumptions of
Proposition \ref{l-1-sum} and so
\[
\f(M)=(\oplus_{n=1}^\infty\f(V_n))_{\ell_1}\cong
(\oplus_{n=1}^\infty\f(\mathcal N))_{\ell_1}
\cong Z.
\]

We extend the set $M$ into a $(1,3)$-net $\m$ in $Y\oplus X$.
Because $\f(\m)\cong\f(\mathcal N)$
it suffices to show that $\f(\m)$ contains a complemented copy of $\f(M)$.
To this end it is enough to
 find a Lipschitz retraction $R:\m\to M$. Denote by $[a]$ the integer 
 part of $a\in\R$. First let
$r:X\to{\mathcal N}$ be a
(non-continuous) retraction such that $[\|x\|]\le\|r(x)\|\le\|x\|,
\|r(x)-x\|\le4$ and $\|r(x)\|=\|x\|$
provided $\|x\|\in\N$. Let
$s:Y\to Y$ be a (non-continuous) retraction defined
for $x=\sum_{i=1}^\infty x_i e_i$ by

\begin{equation}
 s\left(\sum_{i=1}^\infty x_i e_i\right)=\begin{cases}
 d e_k&\text{ if } x_k>\max\{x_i: i\ne k\}\cup\{0\}, d=\min_{i\ne k}[x_k-x_i]\\
 \\  0& \text{otherwise }\end{cases}
\end{equation}

It is easy to see that $\|r(x)-r(y)\|\le 9\|x-y\|$,
$\|s(x)-s(y)\|\le 6\|x-y\|$ provided $\|x-y\|\ge1$ (i.e. they are
Lipschitz for large distances). Indeed,
\[
\|r(x)-r(y)\|\le\|r(x)-x\|+\|r(y)-y\|+\|x-y\|\le8+\|x-y\|\le9\|x-y\|
\]
Assuming $1\le\|x-y\|\le\lambda$, we get $|x_i-y_i|\le\lambda, i\in\N$. Suppose
that $s(x)=de_k$, $s(y)=te_l$. We claim that $d\le3\lambda$.
Indeed, assuming by contradiction that
$x_k\ge d+\max\{x_i: i\ne k\}\ge 3\lambda+\max\{x_i: i\ne k\}$
we obtain that $y_k\ge\lambda+\max\{y_i: i\ne k\}$. Hence $k=l$
and $|d-t|\le2\lambda+2$. The same argument yields $t\le3\lambda$,
so finally we obtain $\|s(x)-s(y)\|\le6\lambda$.

Let $R:\m\to M$ is now defined as

\begin{equation}
R(y\oplus x)=\begin{cases}
s(y)\oplus r\left(\frac{\|s(y)\|}{\|x\|} x\right)&\text{ if }
\|x\|>\|s(y)\|>0\\
\frac{\|r(x)\|}{\|s(y)\|}s(y)\oplus r(x)&\text{ if }
\|s(y)\|\ge\|x\|>0\\0& \text{otherwise }\end{cases}
\end{equation}

We claim that $R$ is a retraction onto $M$. If $y\oplus x\in M$ then clearly
$s(y)=y, r(x)=x$, $\|s(y)\|=\|r(x)\|$ and so $R(y\oplus x)=y\oplus x$. Next,
observe that $R(y\oplus x)\in M$ holds for every $y\oplus x\in\mathcal M$.
Indeed, regardless of the case in the definition of $R$, we see that
the first summand of $R(y\oplus x)$
is a non-negative integer multiple of some basis vector 
$e_n$ in $Y$.
In the first (and third) case it is obvious, in the second case it follows
as the norm of $\frac{\|r(x)\|}{\|s(y)\|}s(y)$ is an integer $\|r(x)\|$.
The second summand is the result of an application of the retraction
$r$, and its norm
equals the norm of the first summand, hence the value of $R(y\oplus x)$
indeed lies in $M$. 

Next, we claim that $R$ is Lipschitz.
Recall that $\mathcal M$ is a $(1,3)$-net in a Banach
space, so it suffices
to prove that there exists a $K>0$ such that $\|R(y_1\oplus x_1)-R(y\oplus x)\|\le K$
whenever $\|y_1\oplus x_1-y\oplus x\|\le D$, for say  $D=8$. This is well-known
and easy to see, as every pair of distinct points  $p,q\in\mathcal M$ can be 
connected by a straight  segment of length $\|p-q\|$, and a sequence
of $[\frac1{\|p-q\|}]+1$ points on this segment of distance (of consecutive
elements) at most one. Each
of these points has an approximant from $\mathcal M$ of distance at most $3$,
so it clear that there exists a sequence of $[\frac1{\|p-q\|}]+1$ points
in $\mathcal M$ of (consecutive) distance at most $D-1=7$, "connecting"
the points $p,q$, and the result follows by a simple summation of the
increments of $R$ along the mentioned sequence.

Let us start the proof of Lipschitzness of $R$ by partitioning $\mathcal M$ 
into three
disjoint subsets 

\[
D_1=\{y\oplus x: \|x\|>\|s(y)\|\ge 20D\},
\]
\[
D_2=\{y\oplus x: \|s(y)\|\ge\|x\|\ge 20D\},
\]
\[
D_3=\{y\oplus x: \min\{\|s(y)\|,\|x\|\}< 20D\}.
\]

The set $D_1$ (resp. $D_2$) corresponds to the case $1$ (resp. $2$)
in the definition of $R$.

Observe that $\|R(y\oplus x)\|\le\min\{\|y\|,\|x\|\}$ so it suffices to
prove the Lipschitzness of $R$ on the set $D_1\cup D_2$.
Moreover, the sets $D_1$ and $D_2$ have in a sense a common 
"boundary" (in the intuitive sense,
which is not contained in $D_1$) consisting of those elements
for which $\|x\|=\|s(y)\|$. It is easy to see that for such elements
the first two cases in definition of $R$ may be applied with the same result
(although formally we are forced to apply the second case). 
Suppose now that $p\in D_1, q\in D_2$. A similar argument as above
using the straight segment connecting $p,q$ (and a short finite sequence
from $\mathcal M$ which approximates this segment) we see that the segment
essentially has to "cross the boundary" between $D_1, D_2$, and so the
proof of the Lipschitzness of $R$ will follow provided
we can do it for each of the sets $D_1, D_2$ separately.

Suppose $y_1=y+\tilde y, x_1=x+\tilde x$ are such that 
$\|\tilde y\|,\|\tilde x\|\le D$. 

\vvs 

Case 1.
We consider first the case $y_1\oplus x_1, y\oplus x\in D_1$. Then

\[
\frac{\|s(y_1)\|}{\|x_1\|}x_1-\frac{\|s(y)\|}{\|x\|}x=
\frac{\|s(y+\tilde y)\|}{\|x+\tilde x\|}(x+\tilde x)-\frac{\|s(y)\|}{\|x\|}x
=
\bo\frac{\|s(y+\tilde y)\|}{\|x+\tilde x\|}-\frac{\|s(y)\|}{\|x\|}\bc x
+\frac{\|s(y+\tilde y)\|}{\|x+\tilde x\|}\tilde x
\]

Now

\[
\left|\frac{\|s(y+\tilde y)\|}{\|x+\tilde x\|}-\frac{\|s(y)\|}{\|x\|}\right|
\le 
\max\{\frac{\|s(y)\|+9D}{\|x\|-D}-\frac{\|s(y)\|}{\|x\|} ,
\frac{\|s(y)\|}{\|x\|}-\frac{\|s(y)\|-9D}{\|x\|+D}\}
\]

\[
\frac{\|s(y)\|+9D}{\|x\|-D}-\frac{\|s(y)\|}{\|x\|}=
\frac{(\|s(y)\|+9D)\|x\|-\|s(y)\|(\|x\|-D)}{(\|x\|-D)\|x\|}
\]
\[
=\frac{9D\|x\|+D\|s(y)\|}{(\|x\|-D)\|x\|}\le
\frac{10D\|x\|}{(\|x\|-D)\|x\|}\le\frac{10D}{\frac{9}{10}\|x\|}\le\frac{100D}{9\|x\|}\leq \frac{12D}{\|x\|}
\]

Similarly, we obtain

\[
\frac{\|s(y)\|}{\|x\|}-\frac{\|s(y)\|-9D}{\|x\|+D}\le\frac{10D}{\|x\|}.
\]

Hence we obtain

\[
\left|\frac{\|s(y+\tilde y)\|}{\|x+\tilde x\|}-\frac{\|s(y)\|}{\|x\|}\right|
\le\frac{12D}{\|x\|}.
\]

The last term is also estimated similarly:
\[
\frac{\|s(y+\tilde y)\|}{\|x+\tilde x\|}\|\tilde x\|
\le\frac{\|s(y)\|+9D}{\|x\|-D}D\le
\frac{\|x\|+9D}{\|x\|-D}D\le3D
\]

So combining the above computations we get

\[
\left\|\frac{\|s(y_1)\|}{\|x_1\|}x_1-\frac{\|s(y)\|}{\|x\|}x\right\|\le
15D
\]

So the mapping $y\oplus x\to\frac{\|s(y)\|}{\|x\|}x$ from $D_1$ to $M$ takes 
vectors of distance at most $D$ to vectors of distance at most $15D$. 
It is now clear that $R$ is Lipschitz on $D_1$.

\vvs

Case 2.
We consider now $y_1\oplus x_1, y\oplus x\in D_2$, and denote $z=s(y+\tilde y)-s(y)$
(recall that $\|z\|\le9D$):

\[
\frac{\|r(x_1)\|}{\|s(y_1)\|}s(y_1)-
\frac{\|r(x)\|}{\|s(y)\|}s(y)=
\frac{\|r(x+\tilde x)\|}{\|s(y+\tilde y)\|}s(y+\tilde y)-
\frac{\|r(x)\|}{\|s(y)\|}s(y)
\]
Therefore
\[
\left\|\frac{\|r(x+\tilde x)\|}{\|s(y+\tilde y)\|}s(y+\tilde y)-
\frac{\|r(x)\|}{\|s(y)\|}s(y)\right\|\]
\[
\leq\max\{\left\|\frac{\|r(x)\|+9D}{\|s(y)\|-9D}(s(y)+z)-
\frac{\|r(x)\|}{\|s(y)\|}s(y)\right\|,\left\|\frac{\|r(x)\|-9D}{\|s(y)\|+9D}(s(y)+z)-
\frac{\|r(x)\|}{\|s(y)\|}s(y)\right\|\}
\]

The first term could be rewritten and estimated as follows:

\[
\left\|\frac{(\|r(x)\|+9D)\|s(y)\|}{(\|s(y)\|-9D)\|s(y)\|}(s(y)+z)-
\frac{(\|s(y)\|-9D)\|r(x)\|}{(\|s(y)\|-9D)\|s(y)\|}s(y)\right\|
\]
 
\[
\le\left\|\bo\frac{(\|r(x)\|+9D)\|s(y)\|}{(\|s(y)\|-9D)\|s(y)\|}-
\frac{(\|s(y)\|-9D)\|r(x)\|}{(\|s(y)\|-9D)\|s(y)\|}\bc s(y)\right\|+
\frac{\|r(x)\|+9D}{\|s(y)\|-9D}9D
\] 
 
\[
\le\left\|\bo\frac{(\|r(x)\|+9D)\|s(y)\|-
(\|s(y)\|-9D)\|r(x)\|}{(\|s(y)\|-9D)\|s(y)\|}\bc s(y)\right\|+27D
\]
 
\[
\le\left|\frac{(\|r(x)\|+9D)\|s(y)\|-
(\|s(y)\|-9D)\|r(x)\|}{\|s(y)\|-9D}\right|+
27D
\] 

\[
\le\left|\frac{9D\|s(y)\|+9D\|r(x)\|}{\|s(y)\|-9D}\right|+
27D\le\frac{18D\|s(y)\|}{\|s(y)\|-9D}+
27D\le 63D.
\]   

The second term we estimate analogously

$$\left\|\frac{\|r(x)\|-9D}{\|s(y)\|+9D}(s(y)+z)-
\frac{\|r(x)\|}{\|s(y)\|}s(y)\right\|$$
\[
\le\left\|\bo\frac{(\|r(x)\|-9D)\|s(y)\|}{(\|s(y)\|+9D)\|s(y)\|}-
\frac{(\|s(y)\|+9D)\|r(x)\|}{(\|s(y)\|+9D)\|s(y)\|}\bc s(y)\right\|+
\frac{\|r(x)\|-9D}{\|s(y)\|+9D}9D
\] 
\[
\le\left|\frac{9D\|s(y)\|+
9D\|r(x)\|}{\|s(y)\|+9D}\right|+
9D\leq \frac{18D\n s(y)\n}{\n s(y)\n}+9D\leq 27D.
\] 

 We conclude that $R$ is Lipschitz on the whole domain $\mathcal M$.  
 Hence $\mathcal F(M)$ is
isomorphic to a complemented subspace of $\f(\m)\cong\f({\mathcal N})$.
\end{proof}

A simple situation which fits the above assumptions is when
$X$ contains a complemented subspace with a symmetric basis
(e.g. $\ell_p$, $c_0$ or an Orlicz sequence space).
By the standard structural theorems for classical Banach spaces (\cite{LT})
we obtain.

\begin{corollary}
Let $X$ be a Banach space isomorphic to any of the (classical) spaces
$\ell_p, L_p, 1\le p<\infty, C(K)$, or an Orlicz space $h_M$,
$\mathcal N$ be a net in $X$. Then
\[
\mathcal F({\mathcal N})\cong(\oplus_{j=1}^\infty F({\mathcal N}))_{\ell_1}.
\]
\end{corollary}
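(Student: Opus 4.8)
The plan is to obtain the corollary as a direct application of Theorem~\ref{pol-l}: for each space $X$ in the list it suffices to exhibit an infinite-dimensional $Y$ with a Schauder basis for which $X\cong Y\oplus X$, and then the asserted isomorphism $\f(\mathcal N)\cong(\oplus_{j=1}^\infty\f(\mathcal N))_{\ell_1}$ follows verbatim. The single device I would use in every case is the following observation: if $X$ contains a \emph{complemented} subspace $W$ with a symmetric basis, then the hypotheses of Theorem~\ref{pol-l} are met with $Y=W$.

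To justify the device, I would first note that a symmetric basis is in particular unconditional, so partitioning its index set into two infinite blocks realizes $W$ as a complemented direct sum of two subspaces, each spanned by a permutation of the original basis and hence isomorphic to $W$; thus $W\cong W\oplus W$. Writing $X\cong W\oplus V$ by complementation and absorbing, a one-line Pelczynski-type computation gives $W\oplus X\cong W\oplus W\oplus V\cong W\oplus V\cong X$, that is $X\cong W\oplus X$. Since $W$ is infinite-dimensional and its symmetric basis is in particular a Schauder basis, Theorem~\ref{pol-l} applies and yields the conclusion. I emphasize that this argument uses only $W\cong W\oplus W$, never that $X$ is isomorphic to its own square.

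It then remains to locate such a $W$ inside each classical space, and this is precisely where the structure theory of \cite{LT} enters. For $X=\ell_p$ and for an Orlicz sequence space $X=h_M$ the unit-vector basis of the whole space is already symmetric, so $W=X$. For $X=L_p$, $1\le p<\infty$, the normalized indicator functions of a disjoint sequence of sets of positive measure span an isometric copy of $\ell_p$ on which the conditional expectation onto the generated $\sigma$-algebra is a norm-one projection, so $W=\ell_p$ is complemented. For $X=C(K)$ with $K$ an infinite metrizable compact, a sequence of disjointly supported bumps spans an isometric copy of $c_0$, which is complemented by Sobczyk's theorem (the space being separable), so $W=c_0$. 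In each case $W$ carries a symmetric basis and the previous paragraph concludes the proof.

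The conceptual weight is carried entirely by Theorem~\ref{pol-l}, so the corollary is a matter of verifying hypotheses and there is no genuine obstacle beyond quoting the correct structural facts. The one point demanding mild care is the $C(K)$ case: there one leans on the complemented copy of $c_0$ (available for \emph{every} metrizable compact $K$, whether countable or not, via the disjoint-bumps construction and Sobczyk) and applies the square-splitting to $c_0$ rather than to $C(K)$ itself, which spares us from having to argue directly that $C(K)$ is isomorphic to its square.
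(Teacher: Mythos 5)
Your proof is correct and is essentially the paper's own argument: the paper's one-line justification of this corollary is precisely the observation that a complemented subspace $W$ with a symmetric basis gives $W\cong W\oplus W$, hence $X\cong W\oplus X$, so that Theorem \ref{pol-l} applies, with the structural facts quoted from \cite{LT}; you have simply written out these standard details ($\ell_p$ complemented in $L_p$ via conditional expectation, $c_0$ complemented in $C(K)$ via Sobczyk, and the symmetric unit vector bases of $\ell_p$ and $h_M$). Your restriction to infinite metrizable $K$ in the $C(K)$ case matches the paper's intended setting (cf.\ its abstract and Theorem \ref{ck}) and is in fact needed, since for instance $C(\beta\N)=\ell_\infty$ has no infinite-dimensional complemented subspace with a Schauder basis by Rosenthal's theorem, so the hypothesis of Theorem \ref{pol-l} genuinely fails there.
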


Recall that a metric space $M$ is an absolute
 Lipschitz retract if, for some $K>0$, $M$ is a $K$-Lipschitz retract of every
metric superspace $M\subset N$ (\cite{benlin} p.13).
We are going to use the discretized form of this condition. This concept
is almost explicit in the work of Kalton \cite{K3}, where it would
probably be called absolute coarse retract.

\vs

\begin{definition}
Let $M$ be a $\delta$-uniformly discrete space, $\delta>0$.
We say that $M$ is an absolute
uniformly discrete Lipschitz retract if, for some $K>0$, the space $M$ is a $K$-Lipschitz retract of every
$\delta$-uniformly discrete superspace $M\subset N$.
\end{definition}

\begin{lemma}
Let $X$ be Banach space which is an absolute Lipschitz retract, ${\mathcal N}$ be a net
in $X$. Then ${\mathcal N}$ is absolute
uniformly discrete Lipschitz retract. Conversely,
if ${\mathcal N}$ is absolute
uniformly discrete Lipschitz retract and $X$ is a Lipschitz retract of $X^{**}$
then $X$ is an absolute Lipschitz retract.
\end{lemma}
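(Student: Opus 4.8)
The statement has two directions, and I would attack them separately since the mechanisms are different.

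\textbf{Forward direction.} Suppose $X$ is an absolute $K$-Lipschitz retract, and let $\mathcal N$ be a net in $X$, say $\delta$-uniformly discrete. I want to show $\mathcal N$ is an absolute uniformly discrete Lipschitz retract. So fix any $\delta$-uniformly discrete superspace $\mathcal N\subset P$. The plan is to pass through the ambient space $X$: realize $P$ as sitting inside some metric superspace of $X$, use the absolute retraction property of $X$ to land back in $X$, and then compose with a Lipschitz retraction from $X$ onto the net $\mathcal N$ (which exists by the standard nearest-point argument, since $\mathcal N$ is a net and is uniformly discrete, so a ``move each point to a nearby net point'' map is Lipschitz for points at mutual distance $\ge\delta$). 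The subtle bookkeeping step is to embed $P$ appropriately: one can form the amalgam/pushout of $X$ and $P$ over the common subset $\mathcal N$ (i.e. glue $P$ to $X$ along $\mathcal N$, taking on $X\cup P$ the largest metric consistent with both, which is well defined precisely because $\mathcal N$ is common to both). Then $X$ is a $K$-Lipschitz retract of this amalgam, and restricting that retraction to $P$ followed by the net-retraction $X\to\mathcal N$ gives a Lipschitz retraction $P\to\mathcal N$ with constant controlled by $K$, the net constants, and $\delta$. The one thing to watch is that the gluing metric does not distort distances inside $P$ and inside $X$ — the pushout in the category of metric spaces preserves both, so this is safe.

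\textbf{Converse direction.} Now assume $\mathcal N$ is an absolute uniformly discrete Lipschitz retract and that $X$ is a Lipschitz retract of $X^{**}$; I want $X$ to be an absolute Lipschitz retract. Take any metric superspace $X\subset N$. The idea is to discretize: inside $N$, choose a maximal $\delta$-separated set $\mathcal P$ that \emph{contains} the net $\mathcal N$ (extend $\mathcal N$ by Zorn's lemma to a maximal $\delta$-separated subset of $N$). Then $\mathcal P$ is a $\delta$-uniformly discrete superspace of $\mathcal N$, so by hypothesis there is a $K$-Lipschitz retraction $\rho:\mathcal P\to\mathcal N$. The remaining work is to promote this discrete retraction to a genuine Lipschitz retraction of all of $N$ onto $X$, with a uniform constant. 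I would do this in stages: first approximate an arbitrary $x\in N$ by a near point of $\mathcal P$ (distance $<\delta$ by maximality), apply $\rho$ to land in $\mathcal N\subset X$; this produces a \emph{coarse} Lipschitz (large-scale Lipschitz) map $N\to X$ fixing $\mathcal N$. The passage from large-scale Lipschitz to genuinely Lipschitz, and from fixing $\mathcal N$ to fixing all of $X$, is where the hypothesis $X\hookrightarrow X^{**}$ as a Lipschitz retract enters: one uses that a coarse Lipschitz map into a space that is nicely complemented in its bidual can be ``smoothed'' (via a weak$^*$ integration / averaging over a net argument in the spirit of Kalton) to an honest Lipschitz retraction.

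\textbf{Main obstacle.} I expect the converse to be the hard part, and specifically the upgrade from a \emph{coarse} (large-scale) Lipschitz retraction onto $\mathcal N$ to an \emph{honest} Lipschitz retraction of $N$ onto $X$. The discrete hypothesis only controls behavior at scale $\ge\delta$, so at small scales the map is uncontrolled and must be repaired. This is exactly the kind of step that requires a weak$^*$-limit/averaging construction and is the reason the hypothesis ``$X$ is a Lipschitz retract of $X^{**}$'' is present — it is what lets one take the weak$^*$ cluster point of a net of approximate retractions and land back inside $X$ rather than merely in $X^{**}$. Getting a \emph{uniform} Lipschitz constant out of this limiting process, independent of the superspace $N$, is the delicate point I would spend the most care on; the forward direction, by contrast, is essentially a clean composition of three Lipschitz maps.
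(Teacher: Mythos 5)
Your forward direction is fine and is exactly what the paper leaves to the reader as ``obvious'': glue the uniformly discrete superspace $P$ to $X$ along $\mathcal{N}$, retract the amalgam onto $X$ using the absolute retract property, and follow with the nearest-point map onto $\mathcal{N}$, whose additive error is absorbed precisely because $P$ is $\delta$-uniformly discrete.

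The converse, however, has a genuine gap, located exactly at the step you yourself flag as delicate. You propose to work inside an \emph{arbitrary} metric superspace $N\supset X$: discretize, build a coarse retraction of $N$ onto $X$, and then upgrade it to an honest Lipschitz retraction by a weak$^*$ averaging argument. But the only mechanism behind such upgrades --- in particular the proof of Theorem 1 of \cite{lin}, which is what the paper invokes --- is a \emph{scaling} argument: one puts $R_n(y)=\frac1n R(ny)$, notes that these maps are uniformly Lipschitz at scales $\geq 1/n$ and move points of $X$ by at most $C/n$, takes a pointwise weak$^*$ cluster point with values in $X^{**}$, and finally composes with the retraction $X^{**}\to X$. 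This requires the domain of $R$ to be a Banach space (one must be able to form $ny$) and requires $X$ to sit inside it as a \emph{linear} subspace (so that $ny\in X$ whenever $y\in X$, which is what makes the cluster point fix $X$ pointwise). An abstract metric superspace $N$ has neither feature: there is no scaling, no group structure to average over, and no convolution, so the coarse-to-Lipschitz upgrade you invoke has nothing to run on; this is not a matter of tracking constants. The paper avoids the problem by reversing the order of reductions: first embed $X$ linearly and isometrically into $Y=\ell_\infty(\Gamma)$, and observe that by injectivity of $\ell_\infty(\Gamma)$ (coordinatewise McShane extension of Lipschitz maps) it suffices to produce a single Lipschitz retraction $Y\to X$, since any superspace $N$ then maps into $Y$ by a Lipschitz map fixing $X$ pointwise. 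Only inside this \emph{linear} superspace does it discretize (extend $\mathcal N$ to a net $\mathcal M$ of $Y$, retract $\mathcal M\to\mathcal N$ by hypothesis, extend by nearest points to a coarse retraction $Y\to X$ which is Lipschitz for large distances, in the terminology of \cite{K3}), and only then does it run Lindenstrauss's scaling argument, where the hypothesis that $X$ is a Lipschitz retract of $X^{**}$ finishes the proof exactly as you anticipated. So your converse must be restructured: the reduction to $\ell_\infty(\Gamma)$ cannot be skipped.
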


\begin{proof}
The first implication is obvious. To prove the second one,
suppose that $X\subset\ell_\infty(\Gamma)=Y$ is a linear embedding.
Since $\ell_\infty(\Gamma)$ is an injective space,
it suffices to prove that there is a Lipschitz retraction from
$\ell_{\infty}(\Gamma)$ onto $X$. Since $X$ is a Lipschitz retract of
$X^{**}$, it suffices to follow verbatim the proof of  Theorem 1 in \cite{lin}.
Indeed,
consider a net $\mathcal N$ in $X$
with extension into a net
$\m$ in $Y$. By assumption, there exists a Lipschitz retraction
$r:\m\to\mathcal N$. This retraction $r$ can be easily extended to a coarsely
continuous retraction $R$ from $Y$ onto $X$
(using the terminology of \cite{K3}), which is of course
Lipschitz for large distances. It is this condition on $R$ that is
used in the proof of Theorem 1 in \cite{lin}.

\end{proof}

Remark. It is an open problem if the retraction from $X^{**}$ to $X$ exists
for every separable Banach space (see \cite{K3}).

Important examples of absolute uniformly discrete Lipschitz retract
are the nets in  $C(K)$ spaces, $K$ metrizable compact,
\cite{benlin} p.15.

\vs

\begin{corollary}
Let $\m$ be a countable absolute
uniformly discrete Lipschitz retract which contains a bi-Lipschitz
copy of the net $\mathcal N$ in $c_0$.
Then $\f(\m)\cong\mathcal F({\mathcal N})$.
\end{corollary}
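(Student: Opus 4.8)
The plan is to apply Pelczynski's decomposition method. Concretely, I would show that $\f(\mathcal N)$ and $\f(\m)$ are each isomorphic to a complemented subspace of the other, and combine this with the fact that $\f(\mathcal N)$ is isomorphic to its own $\ell_1$-sum; these three facts together force $\f(\m)\cong\f(\mathcal N)$. The self-sum property is immediate from the results already established: since $c_0\cong c_0\oplus c_0$ and $c_0$ carries a normalized Schauder basis, Theorem \ref{pol-l} applies with $X=Y=c_0$ and yields $\f(\mathcal N)\cong(\oplus_{j=1}^\infty\f(\mathcal N))_{\ell_1}$ for every net $\mathcal N$ in $c_0$.

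For the first complementation, that $\f(\mathcal N)$ is complemented in $\f(\m)$, I would use the hypothesis that $\m$ contains a bi-Lipschitz copy $\mathcal N_1$ of the net $\mathcal N$. Since being an absolute uniformly discrete Lipschitz retract is a bi-Lipschitz invariant and holds for nets in $c_0$ (as recorded above, \cite{benlin}), $\mathcal N_1$ is itself such a retract. I would then build a Lipschitz retraction $r:\m\to\mathcal N_1$ by the ``round, retract, extend'' device: pass to a maximal separated subset $N_0$ of $\m$ containing $\mathcal N_1$ whose separation scale matches that of $\mathcal N_1$, apply the absolute-retract property to obtain a Lipschitz retraction $N_0\to\mathcal N_1$, and extend it to $\m$ by precomposing with the nearest-point map $\m\to N_0$, which is Lipschitz because $\m$ is uniformly discrete. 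Proposition \ref{ret} then gives $\f(\m)\cong\f(\mathcal N_1)\oplus\f_{\mathcal N_1}(\m)$, and as $\f(\mathcal N_1)\cong\f(\mathcal N)$ this exhibits $\f(\mathcal N)$ as a complemented subspace of $\f(\m)$.

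For the reverse complementation, that $\f(\m)$ is complemented in $\f(\mathcal N)$, I would invoke Aharoni's theorem (\cite{benlin}): every separable metric space embeds bi-Lipschitzly into $c_0$. Applied to the countable space $\m$ this produces a uniformly discrete copy $\m_1\subset c_0$; extending $\m_1$ to a maximal separated set $\mathcal N_2$ of the same separation scale yields a net in $c_0$ with $\m_1\subset\mathcal N_2$ at matching constants. Since $\m_1$ is bi-Lipschitz to $\m$, it inherits the absolute uniformly discrete Lipschitz retract property from the hypothesis on $\m$, so there is a Lipschitz retraction $\mathcal N_2\to\m_1$; Proposition \ref{ret} shows $\f(\m_1)\cong\f(\m)$ is complemented in $\f(\mathcal N_2)$. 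Finally $\f(\mathcal N_2)\cong\f(\mathcal N)$ by Proposition \ref{sam-f}, both being nets of the same (countable) cardinality in $c_0$, so $\f(\m)$ is complemented in $\f(\mathcal N)$.

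With both complementations and $\f(\mathcal N)\cong(\oplus_{j}\f(\mathcal N))_{\ell_1}$ in hand, the (one-sided) Pelczynski decomposition closes the argument: writing $\f(\m)\cong\f(\mathcal N)\oplus A$ and $\f(\mathcal N)\cong\f(\m)\oplus B$ and using the $\ell_1$-sum to absorb the auxiliary summands, one obtains in a few lines $\f(\mathcal N)\cong\f(\mathcal N)\oplus\f(\m)$ and then $\f(\mathcal N)\oplus\f(\m)\cong\f(\m)$, whence $\f(\mathcal N)\cong\f(\m)$. The step I expect to be most delicate is the bookkeeping of uniform-discreteness constants when invoking the absolute-retract property: the copy $\mathcal N_1$ sitting inside $\m$ need not be separated at the scale of $\m$, so the retraction onto it cannot be quoted directly and must be routed through the auxiliary net $N_0$ adapted to $\mathcal N_1$, as above. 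In the reverse direction the constants can instead be matched by construction, so there the only genuinely new input is Aharoni's embedding of $\m$ into $c_0$.
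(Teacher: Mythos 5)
Your proposal is correct and follows essentially the same route as the paper: both directions of complementation come from Lipschitz retractions (onto the copy of $\mathcal N$ inside $\m$, and onto the Aharoni-embedded copy of $\m$ inside a net in $c_0$), and the conclusion follows from Theorem \ref{pol-l} applied to $c_0$ together with the Pelczynski decomposition principle. Your treatment is in fact more careful than the paper's one-paragraph argument, since you make explicit the separation-scale bookkeeping (the ``round, retract, extend'' step) and the bi-Lipschitz invariance of the absolute uniformly discrete retract property, both of which the paper uses silently.
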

\begin{proof}
There is a Lipschitz retraction from $\m$ onto
$\mathcal N$, and on the other hand
using Aharoni's theorem (\cite{FHHMZ} p. 546) $\m$ is bi-Lipschitz
embedded into $\mathcal N$ (and hence also a retract).
Thus
$\f(\m)$ is complemented in $\mathcal F({\mathcal N})$
and vice versa. To finish, apply Theorem \ref{pol-l} for $c_0$
together
with the Pelczynski decomposition principle.
\end{proof}

To give concrete applications of the above corollary, we obtain
the following result. The case of $c^+_0$ follows from the Pelant
$c^+_0$-version of Aharoni's result \cite{Pe}.

\begin{theorem}\label{ck}
Let  $\mathcal N$ be a net in $c_0$ and $\m$ be a net in any of the
following metric spaces: $C(K)$, $K$ infinite metrizable compact, or
$c_0^+$ (the subset of $c_0$ consisting of elements
with non-negative coordinates).
Then $\f(\m)\cong\mathcal F({\mathcal N})$.
\end{theorem}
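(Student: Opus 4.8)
The plan is to deduce Theorem \ref{ck} directly from the Corollary immediately preceding it, which guarantees $\f(\m)\cong\f(\mathcal N)$ as soon as $\m$ is a countable absolute uniformly discrete Lipschitz retract containing a bi-Lipschitz copy of a net in $c_0$. By Proposition \ref{sam-f} it is enough, for each of the two families of target spaces, to produce a \emph{single} net $\m$ satisfying the hypotheses of that Corollary; Proposition \ref{sam-f} then propagates the conclusion to every net in the space in question.

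First I would treat the $C(K)$ case. Since $K$ is an infinite metrizable compact, $C(K)$ is separable, so every net in it is countable, and $C(K)$ is an absolute Lipschitz retract (\cite{benlin} p.15); by the Lemma preceding the Corollary, every net in $C(K)$ is then an absolute uniformly discrete Lipschitz retract. To supply the required $c_0$-net, I would use that an infinite compact metric space contains infinitely many pairwise disjoint nonempty open sets, so Urysohn's lemma yields disjointly supported norm-one functions spanning an isometric copy of $c_0$ inside $C(K)$. I pick a net $\mathcal N_0$ in this copy (a genuine net in $c_0$) and enlarge it, say to a maximal $1$-separated set, into a net $\m$ of $C(K)$. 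Then $\m$ is a countable absolute uniformly discrete Lipschitz retract containing the bi-Lipschitz copy $\mathcal N_0$ of a $c_0$-net, so the Corollary gives $\f(\m)\cong\f(\mathcal N)$, and Proposition \ref{sam-f} transfers this to every net in $C(K)$.

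For $c_0^+$ the scheme is identical, but the two ingredients must be furnished by Pelant's $c_0^+$-analogue of Aharoni's theorem \cite{Pe} in place of classical absolute-retract theory. Concretely, I would invoke that result to (i) bi-Lipschitz embed the net $\mathcal N\subset c_0$ into $c_0^+$ and extend its image to a net $\m$ in $c_0^+$, and (ii) ensure that $\m$ is an absolute uniformly discrete Lipschitz retract. As $c_0^+$ is separable, $\m$ is countable, and the Corollary once more yields $\f(\m)\cong\f(\mathcal N)$, with Proposition \ref{sam-f} completing the argument.

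The main obstacle is the $c_0^+$ case. Because $c_0^+$ is merely a cone and not a Banach space, neither the isometric inclusion of $c_0$ nor the absolute Lipschitz retract property is available from the standard theory invoked for $C(K)$; both the embedding of a $c_0$-net and the uniformly discrete retraction property have to be extracted from Pelant's refinement \cite{Pe}. Verifying that a net in $c_0^+$ is genuinely an absolute uniformly discrete Lipschitz retract, rather than only a retract of the ambient $c_0$, is the delicate point. Once these facts are secured, the proof is simply a matter of quoting the Corollary in each case.
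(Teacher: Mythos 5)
Your overall strategy---verify the hypotheses of the preceding Corollary for one well-chosen net in each target space and let Proposition \ref{sam-f} propagate the isomorphism to all other nets---is exactly the paper's own proof, which consists of nothing more than the remark that nets in $C(K)$ spaces are absolute uniformly discrete Lipschitz retracts plus a citation of Pelant for the $c_0^+$ case. Your treatment of $C(K)$ is complete and correct: countability from separability, the retract property from the Lemma applied to the absolute Lipschitz retract $C(K)$ (\cite{benlin} p.15), and the bi-Lipschitz copy of a $c_0$-net from the isometric copy of $c_0$ spanned by disjointly supported norm-one functions, extended to a net of $C(K)$.

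The $c_0^+$ case, however, contains a genuine gap, and you have located it yourself without closing it: you need the net $\m\subset c_0^+$ to be an absolute uniformly discrete Lipschitz retract, and you propose to ``extract'' this from Pelant's theorem. That cannot work. Pelant's result \cite{Pe} is purely an embedding theorem (every separable metric space embeds bi-Lipschitzly into $c_0^+$); an embedding theorem produces subobjects, never retractions, and since it applies to \emph{every} countable uniformly discrete space it cannot single out the restrictive retract property on which the whole Corollary rests. So ingredient (ii) of your plan fails as stated; Pelant is needed only for ingredient (i), the copy of the $c_0$-net inside $\m$. The missing idea is precisely the one you set aside as insufficient: being ``only a retract of the ambient $c_0$'' is in fact enough. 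The positive-part map $P(x)=x^+=(\max\{x_i,0\})_{i\in\N}$ is a $1$-Lipschitz retraction of $c_0$ onto $c_0^+$, and a Lipschitz retract of an absolute Lipschitz retract is again an absolute Lipschitz retract: given any metric superspace $N\supset c_0^+$, glue $N$ and $c_0$ along $c_0^+$ with the infimum metric, retract the amalgam onto $c_0$ (possible since $c_0$ is an absolute Lipschitz retract, \cite{benlin} p.15), and follow with $P$. Hence $c_0^+$ is an absolute Lipschitz retract, and the first implication of the paper's Lemma---whose proof is purely metric, the linearity of the ambient space playing no role, one only composes the retraction onto $c_0^+$ obtained by amalgamation with a nearest-point map into the net---shows that every net in $c_0^+$ is an absolute uniformly discrete Lipschitz retract. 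With that established, your application of the Corollary and of Proposition \ref{sam-f} goes through verbatim.
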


\section{Schauder basis}

\begin{theorem}\label{one} Let $X$ be a metric space. Suppose there exist a set $M\sub X$ and a sequence of distinct points $\{\mu_n\}_{n=1}^{\infty}\subseteq M$, together with a sequence of retractions
$\{\fii_n\}_{n=1}^\infty$, $\fii_n:M\to M$, $n\in\N$, which satisfy the following conditions:
\begin{enumerate}[(i)]
\item $\fii_n(M)=M_n:=\bigcup_{j=1}^{n}\{\mu_j\}$ for every $n\in\N$,\label{bed1}
\item $\overline{\bigcup_{j=1}^{\infty}\{\mu_j\}}^X=M$, \label{bed2}
\item There exists $K>0$ such that $\fii_n$ is $K$-Lipschitz for every $n\in\N$,\label{bed3}
\item $\fii_m\fii_n=\fii_n\fii_m=\fii_n$ for every $m,n\in\N$, $n\leq m$. \label{bed4}
\end{enumerate}
Then the space $\F(M)$ has a Schauder basis with the basis constant at most $K$.
\end{theorem}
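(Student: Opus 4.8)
The plan is to lift each retraction $\fii_n$ to a bounded linear projection on $\F(M)$ and then to recognise the hypotheses (i)--(iv) as exactly the data describing the partial-sum projections of a Schauder basis. First I would note that each $\fii_n$ must fix the base point $0$ for $\widehat{\fii_n}$ to act on $\F(M)$; since a retraction onto $M_n$ fixes precisely $M_n$, this forces $0\in M_n$ for all $n$, i.e. $0=\mu_1$, which I henceforth assume. Recalling the functoriality of the free-space construction (\cite{W}), a base-point preserving Lipschitz self-map $L$ of $M$ induces a bounded linear operator $\widehat L$ on $\F(M)$ with $\widehat L\,\delta_x=\delta_{L(x)}$, $\|\widehat L\|=\Lip(L)$, and $\widehat{L\circ L'}=\widehat L\,\widehat{L'}$. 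Setting $P_n=\widehat{\fii_n}$, condition (iii) gives $\|P_n\|\le K$, while (iv) combined with functoriality gives $P_mP_n=P_nP_m=P_n$ for $n\le m$; in particular each $P_n$ is a projection and $P_mP_n=P_{\min(m,n)}$ in general. Because $\fii_n(M)=M_n$ and $\fii_n$ fixes $M_n$, the range of $P_n$ equals $\F(M_n)=\operatorname{span}\{\delta_{\mu_2},\dots,\delta_{\mu_n}\}$, of dimension $n-1$.

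Next I would establish the strong convergence $P_nx\to x$. Since $\delta$ is an isometric embedding, condition (ii) yields that $\operatorname{span}\{\delta_{\mu_j}:j\in\N\}$ is dense in $\F(M)$. On this subspace the convergence in fact stabilises: for $j\le n$ we have $\fii_n(\mu_j)=\mu_j$, hence $P_n\delta_{\mu_j}=\delta_{\mu_j}$, so $P_n$ equals the identity on any fixed finite combination of the $\delta_{\mu_j}$ once $n$ is large. As $\sup_n\|P_n\|\le K$, a routine three-$\varepsilon$ argument then upgrades this to $P_nx\to x$ for every $x\in\F(M)$.

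Finally I would apply the classical passage from projections to a basis. Writing $Q_n=P_{n+1}$ gives $Q_0=0$, $\dim Q_n(\F(M))=n$, $Q_mQ_n=Q_{\min(m,n)}$, $\|Q_n\|\le K$, and $Q_nx\to x$. Each $Q_n-Q_{n-1}$ is then a rank-one projection; I would pick a nonzero $e_n$ in its range, for instance $e_n=\delta_{\mu_{n+1}}-\delta_{\fii_n(\mu_{n+1})}$, which is nonzero because $\mu_{n+1}\notin M_n$. From $Q_mQ_n=Q_{\min(m,n)}$ one checks $Q_me_n=e_n$ for $n\le m$ and $Q_me_n=0$ for $n>m$. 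Consequently, for each $x$ the telescoping $x=\lim_m Q_mx=\sum_{n\ge1}(Q_n-Q_{n-1})x$ expresses $x$ as a convergent series $\sum_n a_ne_n$, each increment lying in $\operatorname{span}\{e_n\}$, and applying $Q_m-Q_{m-1}$ shows the coefficients $a_n$ are unique. Thus $\{e_n\}_{n\ge1}$ is a Schauder basis whose partial-sum projections are exactly the $Q_n=P_{n+1}$, so its basis constant is $\sup_n\|Q_n\|\le K$.

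I do not expect a deep obstacle: conditions (i)--(iv) are tailored so that $\{P_n\}$ becomes a nested chain of uniformly bounded commuting projections with rank-one increments, and the conclusion is then the textbook basis criterion. The one step carrying genuine (if modest) content is the strong convergence of the second paragraph, since this is the only place where the density hypothesis (ii) and the uniform bound (iii) are genuinely used; everything else is the functorial translation of (i) and (iv) together with the standard basis-from-projections argument.
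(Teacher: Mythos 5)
Your proposal is correct and follows essentially the same route as the paper: lift each $\fii_n$ functorially to $P_n=\widehat{\fii_n}$ with $\|P_n\|\le K$, verify that these are commuting, nested, finite-rank projections converging strongly to the identity (using density of $\operatorname{span}\{\delta_{\mu_j}\}$ from (ii) plus uniform boundedness), and invoke the standard projections-to-basis correspondence. The only differences are that you make explicit two points the paper leaves implicit --- that $\mu_1$ must be the base point $0$ for the lifts to exist, and the textbook construction of the basis vectors $e_n$ from the rank-one increments $P_{n+1}-P_n$ --- both of which are welcome additions in rigor rather than a change of method.
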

\begin{proof}
It is a well-known fact that every Lipschitz mapping $L:A\to B$ between pointed metric spaces $A,B$, such that $L(0)=0$ extends uniquely to a linear mapping\\ $\widehat{L}:\F(A)\to\F(B)$ in a way that that the following diagram commutes:
$$\begin{CD}
\F(A) @>{\widehat{L}}>> \F(B)\\
@A{\d_A}AA @AA{\d_B}A\\
A @>{L}>> B
\end{CD}$$
Moreover, the norm of $\widehat{L}$ is at most $\lip(L)$. Therefore for every $n\in\N$ there is a  linear mapping $P_n=\widehat{\fii_{n}}:\F(M)\to\F(M)$ extending $\fii_n:M\to M$ with $\n P_n\n\leq K$. We want to prove that $\{P_n\}$ is a sequence of canonical projections associted with some Schauder basis of $\F(M)$, namely that

\begin{enumerate}[a)]
\item $\dim P_n(\F(M))=n-1$ for every $n\in\N$,
\item $P_nP_m=P_mP_n=P_m$ for all $m,n\in\N$, $m\leq n$,
\item $\lim_{n}P_n(x)=x$ for all $x\in\F(M)$.
\end{enumerate}

The first condition is easy: as $\fii_n(M)=M_n=\{\mu_i\}_{i=1}^{n}$ we have $P_n(\F(M))=\F(M_n)$, which is a $(n-1)$-dimensional space. Let us check the commutativity. Note first that for $m,n\in\N$ the diagram
$$\begin{CD}
\F(M) @>{P_m}>> \F(M)@>{P_n}>> \F(M)\\
@A{\d_M}AA @AA{\d_M}A @AA{\d_M}A\\
M @>{\fii_m}>> M @>{\fii_n}>> M
\end{CD}$$
commutes, which means that $\widehat{\fii_n\circ\fii_m}=P_nP_m$. But then from the condition \ref{bed4} follows $P_nP_m=P_mP_n=P_m$ for $m\leq n$.

The validity of the limit equation is proved easily. Note that elements of the form $\sum_{i=1}^m\a_i\delta_{x_i}$, where $m\in\N$, $x_i\in \{\mu_n\}_{n=1}^\infty$, $\a_i\in\R$ for all $i\in\{1,\cdots,m\}$, are norm dense in $\F(M)$. Indeed, it is a well-known fact that elements $\mu\in\F(M)$ of the same form $\sum_{i=1}^m\a_i\delta_{x_i}$ with $x_i\in M$ are norm dense in $\F(M)$ and the condition \ref{bed2} gives the more general result. By uniform boundedness of the family $\{P_n\}_{n=1}^{\infty}$, it suffices to check the limit for elements mentioned above. Thus pick a measure $\mu=\sum_{i=1}^m\a_i\delta_{x_i}$, $m\in\N$, $a_i\in\R$, $x_i\in\{\mu_j\}_{j=1}^{\infty}$ for all $i\in\{1,...,m\}$. Find $k\in\N$ such that $\{x_1,\cdots,x_m\}\subseteq M_k$. Then for all $n\geq k$ we have
\begin{align*}
\n P_n\mu-\mu\n&=\sup_{\n f\n\leq 1}\left|\langle f,\sum_{i=1}^m\a_i(\delta_{\fii_n(x_i)}-\delta_{x_i})\rangle\right|=\sup_{\n f\n\leq 1}\left|\sum_{i=1}^m\left(\a_i f(\fii_n(x_i))-\a_if(x_i)\right)\right|=\\
&=\sup_{\n f\n\leq 1}\left|\sum_{i=1}^m\left(\a_i f(x_i)-\a_if(x_i)\right)\right|=0.
\end{align*}
This was to prove.
\end{proof}
\begin{definition} Let $X$ be a Banach space with a Schauder basis $E=\{e_i\}_{i=1}^\infty$. The set $M(E)=\{x\in X|\ x=\sum_{i=1}^{\infty}x_ie_i,\ x_i\in\Z,i\in\N\}$ we call the integer-grid to the basis $E$. If it is clear what basis we are working with, we will denote the set $M$ and speak simply about a grid.
\end{definition}
It is not difficult to see that if a basis $E$ is normalized, then the grid $M(E)$ is a $\frac{1}{2\text{bc}(E)}$-separated set, where $\text{bc}(E)$ denotes the basis constant of $E$. For $E$ an unconditional basis we will denote $\text{uc}(E)$ the unconditional constant of $E$. We will now show that for a normalized, unconditional basis $E$ the space $\F(M)$ has a Schauder basis.
\begin{lemma}\label{dva} Let $X$ be a Banach space with a normalized, unconditional Schauder basis $E=\{e_i\}_{i\in\N}$ and a grid $M(E)=M$. Then there exists a sequence of retractions $\fii_n:M\to M$ together with a sequence of distinct points $\mu_n\in M$, $n\in\N$ satisfying the conditions from the Theorem \ref{one} with the constant at most $K=\text{uc}(E)+2\text{bc}(E)$.
\end{lemma}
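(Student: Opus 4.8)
The plan is to build, by hand, the enumeration $\{\mu_n\}$ and the retractions $\varphi_n$ demanded by Theorem \ref{one}, and then to check that the resulting Lipschitz constants stay below $K=\text{uc}(E)+2\,\text{bc}(E)$. Two elementary facts drive everything. First, since $E$ is normalized with basis constant $\text{bc}(E)$, the partial-sum projection $P_k$ satisfies $\|P_k\|\le\text{bc}(E)$, so its restriction to the grid is $\text{bc}(E)$-Lipschitz and retracts $M$ onto the points supported in the first $k$ coordinates. Second, for a single coordinate the clamp $c_a(t)=\max\{-a,\min\{a,t\}\}$ is monotone and $1$-Lipschitz, so any map of the form $\sum_i x_ie_i\mapsto\sum_i\lambda_i x_ie_i$ with $\lambda_i\in[0,1]$ is a diagonal multiplier and, by unconditionality, is $\text{uc}(E)$-Lipschitz; in particular clamping one prescribed coordinate to an interval is $\text{uc}(E)$-Lipschitz. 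Note also that $M$, being $\frac{1}{2\text{bc}(E)}$-separated, is closed in $X$, so condition (ii) of Theorem \ref{one} holds automatically once $\{\mu_n\}$ enumerates all of $M$.

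The combinatorial skeleton that yields conditions (i) and (iv) for free is a reduction forest. I enumerate $M$ through an increasing exhaustion by finite boxes $C_1\subset C_2\subset\cdots$, $C_k=\{x\in M:\ \operatorname{supp}x\subseteq\{1,\dots,k\},\ |x_i|\le k\}$, listing the points of each shell $C_k\setminus C_{k-1}$ one at a time in a fixed scanning order and starting with $\mu_1=0$. This makes every initial segment $M_n=\{\mu_1,\dots,\mu_n\}$ a box together with an already-scanned part of the next shell. To each $x\neq0$ I attach a parent $\pi(x)\in M$ of strictly smaller index, obtained by a single unit step that reduces $x$ towards the origin (decreasing the most extreme coordinate by one, passing to a smaller box or earlier in the current shell). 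Following parents produces, for each $x$, a finite chain $x,\pi(x),\pi^2(x),\dots,0$ of strictly decreasing index, and I define $\varphi_n(x)$ to be the first member of this chain whose index is $\le n$.

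With this definition conditions (i) and (iv) are immediate. Indeed $\varphi_n$ fixes $M_n$ pointwise and sends $M$ into $M_n$, so it is a retraction with $\varphi_n(M)=M_n$, which is (i). For $n\le m$ the identity $\varphi_m\varphi_n=\varphi_n$ holds because $\varphi_n(x)\in M_n\subseteq M_m$ is fixed by the retraction $\varphi_m$; and $\varphi_n\varphi_m=\varphi_n$ holds because $\varphi_m(x)$ is an ancestor of $x$, so its parent chain is a tail of that of $x$ and the first ancestor of index $\le n$ is the same for both. This is exactly (iv), and the induced operators $P_n=\widehat{\varphi_n}$ are the candidate basis projections.

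The main obstacle is condition (iii), the uniform estimate $\lip(\varphi_n)\le\text{uc}(E)+2\,\text{bc}(E)$. The point is that although iterating the one-step reduction $\pi$ many times would ruin the constant, each individual map $\varphi_n$ acts on a neighbourhood essentially as a support-truncation followed by a coordinate-clamp, so its increments can be bounded additively rather than multiplicatively. Concretely, I plan to realize $\varphi_n$ as the restriction to $M$ of an explicit map $F_n$ on $X$ that first clamps the coordinates to the thresholds fixed by the box part of $M_n$ and then truncates the support at the level of the shell currently being filled, the only subtlety being the single partially-scanned outer coordinate. Estimating $\|\varphi_n(x)-\varphi_n(y)\|$, I would split the difference into its clamping part, controlled by the unconditional multiplier bound and contributing $\text{uc}(E)\,\|x-y\|$, and its truncation part; when $x$ and $y$ are cut at two different support levels $k_x,k_y$, the latter is handled by inserting a common term and using $\|P_{k_x}\|,\|P_{k_y}\|\le\text{bc}(E)$, which is where the factor $2\,\text{bc}(E)$ appears. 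Adding the two contributions via the triangle inequality gives $K=\text{uc}(E)+2\,\text{bc}(E)$, and an appeal to Theorem \ref{one} then finishes the lemma. The delicate bookkeeping lies entirely in matching the scanning order of each shell with the parent map so that this two-term decomposition remains valid uniformly in $n$; I expect this to be the one place where genuine care is needed.
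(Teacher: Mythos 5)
Your combinatorial skeleton is the same as the paper's: the paper also exhausts the grid by finite boxes, also encodes each retraction by nested preimage sets (its key property $F_n(\mu_n)\cap F_m(\mu_m)\in\{\emptyset,F_n(\mu_n)\}$ is exactly your reduction forest), and also gets conditions (i), (ii), (iv) essentially for free from that structure. The entire substance of the lemma, however, is condition (iii), and there your proposal has a genuine gap — in fact two. First, the mechanism you name for the summand $2\,\text{bc}(E)$ does not work. If you realize $\varphi_n$ as ``clamp, then truncate'' and estimate by inserting a common term, you get either $\|P_{k_y}(\mathrm{clamp}(x)-\mathrm{clamp}(y))\|\le\text{bc}(E)\,\text{uc}(E)\,\|x-y\|$ — a product of the two constants, not a sum — or a term $\|(P_{k_x}-P_{k_y})\,\mathrm{clamp}(x)\|$, which is the norm of a block of coordinates of $x$ itself and is not controlled by $\|x-y\|$ at all. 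In the paper the factor $2\,\text{bc}(E)$ has a completely different origin: one proves the coordinatewise estimate $|\varphi_n(x)_i-\varphi_n(y)_i|\le|x_i-y_i|$ for every coordinate except at most one exceptional index $i=j(n)$, where the error is at most $1$; unconditionality then handles all good coordinates at the price of a single factor $\text{uc}(E)$ (note that support truncation at a level fixed by $n$ preserves coordinatewise domination, so no $\text{bc}(E)$ is ever paid for it), and the one unit error is absorbed because the grid is $\frac{1}{2\text{bc}(E)}$-separated, so $x\neq y$ forces $1\le 2\,\text{bc}(E)\,\|x-y\|$. You noticed the separation but used it only to say $M$ is closed; it is in fact the sole source of the $2\,\text{bc}(E)$.

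Second, what you set aside as ``delicate bookkeeping'' — the choice of the parent map and of the scanning order — is precisely where the uniform bound lives, and your stated choices fail. If the parent literally decreases the most extreme coordinate, take $z=(K-1)e_1+e_K$: the chain must spend its first steps reducing the first coordinate, so it enters $C_{K-1}$ at $e_1$ or at $0$ rather than at $(K-1)e_1$; comparing with $z'=(K-1)e_1$, a fixed point of the retraction with $\|z-z'\|=1$, gives a Lipschitz constant of order $K$, unbounded. The parent must reduce an \emph{offending} coordinate (index $K$, or magnitude $K$), not the largest one. Likewise the scanning order inside a shell cannot be an arbitrary ``fixed scanning order'': if one column of the new coordinate is completed before the neighbouring column is started, two adjacent points at large height over the two columns have images differing by roughly that height. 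The paper avoids both traps by splitting each shell into two stages ($C^r_r\to C^{r+1}_r$, then $C^{r+1}_r\to C^{r+1}_{r+1}$), scanning the new coordinate layer by layer and the outer layer face by face via the predecessor maps $p_j$, exactly so that at every stage the clamping thresholds seen by any two grid points differ by at most one unit in a single coordinate. So your architecture is right, but the one nontrivial claim is unproved, and the sketched plan for it would not succeed as stated.
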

\begin{proof}
Before we define the retractions $\{\fii_n\}_{n=1}^\infty$ and the points $\{\mu_n\}_{n=1}^\infty$ rigorously, let us give the reader some geometric idea of how will the retractions look like. We will add points from $M$ so that first the set $C_1^1=\{x_1e_1|\ |x_1|\leq 1\}$ is created, then the set $C_1^2=\{x_1e_1+x_2e_2|\ |x_i|\leq 1,i=1,2\}$, then the set $C_2^2=\{x_1e_1+x_2e_2|\ |x_i|\leq 2,i=1,2\}$, then $C^3_2=\{\sum_{i=1}^3x_ie_i|\ |x_i|\leq 2,i=1,2,3\}$ and so on. Note that coordinates of each $\mu\in C_i^j$ are entire numbers.

The retractions will cut coordinates of the argument so that if $x=\sum_{i=1}^\infty x_ie_i\in M$ and $\{\mu_i\}_{i=1}^n=M_n=\fii_n(M)$, $n\in\N$, then $\fii_n(x)$ is obtained by following algorithm: Choose all $\mu_i\in M_n$ minimizing the value $|x_1-(\mu_i)_1|$, out of them choose those $\mu_{i_j}$ minimizing $|x_2-(\mu_{i_j})_2|$ and so on. Note the process will stop eventually because $x=\sum_{i=1}^kx_ie_i$ for some $k\in\N$ as $x\in M$ and the basis $E$ is normalized. It will be a matter of choosing (ordering) the points $\{\mu_i\}_{i=1}^\infty$ so that the process ends with only one point $\mu_i=\fii_n(x)$.
\\
\\We are now going to describe the construction of the sequence $\fii_n$ in the following way. We will build the sequence of points $\mu_n$ and to each $n\in\N$, we associate the sets $\fii_n^{-1}(\mu_i)$, $i\in\{1,\cdots,n\}$. As we want the image $\fii_n(M)=M_n=\bigcup_{i=1}^{n}\{\mu_i\}$, the only things needed for the mapping $\fii_n$ to be well-defined is to check $\bigcup_{i=1}^n\{\fii_n^{-1}(\mu_i)\}=M$ and $\fii_n^{-1}(\mu_i)\cap\fii^{-1}_n(\mu_j)=\emptyset$ for $i\neq j$. For simplicity, we denote the set-valued mapping $\fii_n^{-1}=F_n$ and we will define the mappings $\fii_n,n\in\N$ through defining $F_n:M_n\to 2^M$. Note that if for every $i\in\{1,...,n\}$ holds $\mu_i\in F_n(\mu_i)$, then the mapping $\fii_n$ is a retraction.

In the sequel, by the $n$-tuple $(a_1,a_2,...a_n)$, $a_i\in\R$ we will mean the linear combination $\sum_{i=1}^na_ie_i$ and for a point $x\in X$, $x=\sum_{i=1}^{\infty}x_ie_i$ we will always identify $x$ with $(x_1,x_2,x_3,...)$.
\\\\Set
\begin{center}
\begin{tabular}{l c l}
$\mu_1=0$ & & $F_1(\mu_1)=M$,\\\\
$\mu_2=(1,0)$ & & $F_2(\mu_2)=\{x\in M|\ x_1\geq 1\}$\\
& & $F_2(\mu_1)=M\setminus F_2(\mu_2)$,\\\\
$\mu_3=(-1,0)$ & & $F_3(\mu_3)=\{x\in M|\ x_1\leq -1\}$\\
& & $F_3(\mu_1)=F_2(\mu_1)\setminus F_3(\mu_3)$\\
& & $F_3(\mu_2)=F_2(\mu_2)$.\\
\end{tabular}
\end{center}

It is not difficult to see $\fii_1,\fii_2,\fii_3$ are retractions satisfying the conditions \ref{bed1},\ref{bed3},\ref{bed4} from the Theorem \ref{one} with Lipschitz constant which equals to $\text{uc}(E)\leq K$. Indeed, for $\fii_1$ it is clear as its image is only $\{0\}$. For $\fii_2$, $x,y\in M$ and $i\in\N$ we have
\begin{equation}
\begin{small}
\left|\fii_2(x)_i-\fii_2(y)_i\right|=\begin{cases}
0 & i>1\vee (x_1\geq 1\vee y_1\geq 1)\vee (x_1\leq 0\vee y_1\leq 0),\\
1 & i=1\wedge \left((x_1\geq1\wedge y_1\leq 0)\vee(y_1\geq1\wedge x_1\leq 0)\right),\\
\end{cases}\label{fiien}
\end{small}
\end{equation}
and similarily for $n=3$, $x\in M$ and $i\in\N$ we have
$$\fii_3(x)_i=\begin{cases}
0 & i>1\vee x_1=0,\\
1 & i=1\wedge x_1\geq1,\\
-1 & i=1\wedge x_1\leq -1\\
\end{cases}$$
and therefore for $x,y\in M$
\begin{equation}
\begin{small}
\left|\fii_3(x)_i-\fii_3(y)_i\right|=\begin{cases}
0 & i>1\vee x_1y_1\geq 1\vee x_1=y_1=0,\\
1 & i=1\wedge \left((|x_1|\geq1\wedge y_1=0)\vee(|y_1|\geq1\wedge x_1=0)\right),\\
2 & i=1\wedge x_1y_1\leq -1.\\
\end{cases}\label{fiien}
\end{small}
\end{equation}
Due to the unconditionality of $E$, it is true that for every $x\in X$ and $z\in\R,\ |z|\leq x_1$ holds $\n (z,x_2,x_3,x_4,...)\n\leq \text{uc}(E)\n x\n$. But for every $i\in\N$ the expression in \eqref{fiien} is less or equal to $|x_i-y_i|$, which gives us Lipschitz condition on $\fii_n$ with constant uc$(E)$.

Moreover, the last retraction $\fii_3$ maps $M$ onto the set $C_1^1\subseteq M$ containing all points $x\in M$ with $x=(x_1)$ and $|x_1|\leq 1$. Let us denote $C^d_r=\{x\in M|\ x=(x_1,x_2,...,x_d),|x_i|\leq r,\ i\leq d\}$. From now on, we will proceed inductively. Suppose we have a sequence of retractions $\{\fii_i\}_{i=1}^m$ together with the points $\mu_i$, such that $\fii_m(M)=C_r^r$ and that $\{\fii_i\}_{i=1}^m$ satisfy the conditions \ref{bed1},\ref{bed3},\ref{bed4} from the Theorem \ref{one}. Note that $m=(2r+1)^r$.

We proceed by induction which we divide into two steps. First we find points $\mu_{m+1},...,\mu_{s}$ together with retractions $\fii_{m+1},...,\fii_s$, where $s=(2r+1)^{r+1}$, such that $M_s=C^{r+1}_r$ and such that $\{\fii_i\}_{i=1}^s$ satisfy the conditions \ref{bed1},\ref{bed3},\ref{bed4} from theorem \ref{one}. Then we find points $\mu_{s+1},...,\mu_{t}$ and retractions $\fii_{s+1},...,\fii_t$, where $t=(2r+3)^{r+1}$, $\fii_t:M\to C^{r+1}_{r+1}$ which satisfy \ref{bed1},\ref{bed3},\ref{bed4}. As $\bigcup_{r=1}^{\infty}C_r^r=M$, the condition \ref{bed2} from theorem \ref{one} is obtained as well, which will conclude the proof.
\\
\\On the bounded set $C^r_r$ we define an ordering by the formula
\begin{equation}
\begin{aligned}
(x_1,x_2,...,x_r)\prec (y_1,y_2,...,y_r)\Leftrightarrow\ (x_1> y_1)\vee\\
\exists i\in\{1,...,r-1\}\forall j\in\{1,...,i\}:(x_j=y_j)\wedge (x_{i+1}>y_{i+1}).\label{order}
\end{aligned}
\end{equation}
There exists a bijection $w:\{1,...,(2r+1)^r\}\to C^r_r$, which preserves order.

Let us shorten the notation by introducing indexing functions $a,b$. If $j\in\{1,...,r\}$ and $i\in\{1,...,(2r+1)^r\}$, let $a(j,i)=j(2r+1)^r+i$ and $b(j,i)=(r+j)(2r+1)^r+i$. We set $\mu_{a(j,i)}=(w(i),j)=w(i)+je_{r+1}$ and $\mu_{b(j,i)}=(w(i),-j)=w(i)-je_{r+1}$. Moreover, we formally put $\mu_{a(0,i)}=\mu_{b(0,i)}=w(i)$. Then we define sets
\begin{align*}
F_{a(j,i)}&(\mu_{a(j,i)})=\{x\in F_{a(j,i)-1}(\mu_{a(j-1,i)}), x_{r+1}\geq j\},\\
F_{a(j,i)}&(\mu_{a(j-1,i)})=F_{a(j,i)-1}(\mu_{a(j-1,i)})\setminus F_{a(j,i)}(\mu_{a(j,i)}),\\
F_{a(j,i)}&(\mu_q)=F_{a(j,i)-1}(\mu_q),\ \ q\in\{1,...,a(j,i)-1\}, \mu_q\neq \mu_{a(j-1,i)},
\end{align*}
and
\begin{align*}
F_{b(j,i)}&(\mu_{b(j,i)})=\{x\in F_{b(j,i)-1}(\mu_{b(j-1,i)}), x_{r+1}\leq -j\},\\
F_{b(j,i)}&(\mu_{b(j-1,i)})=F_{b(j,i)-1}(\mu_{b(j-1,i)})\setminus F_{b(j,i)}(\mu_{b(j,i)}),\\
F_{b(j,i)}&(\mu_q)=F_{b(j,i)-1}(\mu_q),\ \ q\in\{1,...,b(j,i)-1\}, \mu_q\neq \mu_{b(j-1,i)}.
\end{align*}
It is easy to see that the formulae above define mappings $\fii_{a(j,i)}$ and $\fii_{b(j,i)}$. Supposed it holds for the mappings $\{\fii_i\}_{i=1}^m$ it is clear that $F_n(\mu_p)\cap F_n(\mu_q)=\emptyset$ for $p\neq q$ and all $n\in\{1,...,s\}$, and that $\mu_n\in F_n(\mu_n)$ and $\bigcup_{i=1}^n F_n(\mu_i)=M$, which means each mapping $\fii_n$ is well-defined and is a retraction onto $M_n$.

Let us check the uniform Lipschitz boundedness. Fix $n\in\{m+1,...,s\}$. Note first that
\begin{align*}
&\forall x=\sum_{i=1}^{\infty}x_ie_i\in X,\forall z\in\l_{\infty}:\\
&\forall i\in\N: 0\leq |z_i|\leq |x_i|\Rightarrow\left\n \sum_{i=1}^{\infty}z_ie_i\right\n\leq \n x\n\cdot\text{uc}(E)
\end{align*}
From this we deduce the Lipschitz boundedness.
\\
\\If $x,y\in M$, then for $i>r+1$ we have $|\fii_n(x)_i-\fii_n(y)_i|=|0-0|=0\leq|x_i-y_i|.$ If $i<r+1$ then we distinguish three cases:
\begin{enumerate}
\item[a)] $|x_i|,|y_i|\leq r$. Then $\fii_n(x)_i=x_i$, $\fii_n(y)_i=y_i$ and therefore $|\fii_n(x)_i-\fii_n(y)_i|=|x_i-y_i|$.
\item[b)] $|x_i|\leq r$, $|y_i|>r$. Then $\fii_n(x)_i=x_i$ and $\fii_n(y)_i=r\sgn(y_i)$. Therefore $|\fii_n(x)_i-\fii_n(y)_i|=|x_i-r\sgn(y_i)|\leq|x_i-y_i|$.
\item[c)] $|x_i|,|y_i|>r$. Then $\fii_n(x)_i=r\sgn(x_i)$, $\fii_n(y)_i=r\sgn(y_i)$ and therefore
$$|\fii_n(x)_i-\fii_n(y)_i|=|r\sgn(x_i)-r\sgn(y_i)|=\begin{cases}
0\leq |x_i-y_i|, & x_iy_i>0,\\
2r\leq |x_i-y_i|, & x_iy_i<0.
\end{cases}$$
\end{enumerate}
Finally, let $i=r+1$. If now $x_iy_i<0$, then either $0\leq\fii_n(x)_i\leq x_i$ and $y_i\leq\fii_n(y)_i\leq 0$ or vice versa. Both options give $|\fii_n(x)_i-\fii_n(y)_i|\leq |x_i-y_i|$, which is what we need.

Let $x_i,y_i\geq 0$. Suppose $n=a(j,k)$ for eligible $j,k$. Then $\fii_n(x)_i=j$ or $\fii_n(x)_i=j-1$ or $\fii_n(x)_i=x_i$, which occurs whenever $0\leq x_i<j-1$. Of course the same holds for $y$. From this we have either $|\fii_n(x)_i-\fii(y)_i|\leq |x_i-y_i|$ or $|\fii_n(x)_i-\fii(y)_i|\leq 1$. If $n=b(j,k)$ for some $j,k$, then $\fii_n(x)_i=r$ whenever $x_i\geq r$ and $\fii_n(x)_i=x_i$ whenever $x_i<r$, the same for $y$. It is clear that $|\fii_n(x)_i-\fii(y)_i|\leq |x_i-y_i|$.

Let $x_i,y_i\leq 0$. If $n=a(j,k)$ for some $j,k$, then $|\fii_n(x)_i-\fii(y)_i|=|0-0|=0\leq |x_i-y_i|$. If $n=b(j,k)$ for some $j,k$, then $\fii_n(x)_i=-j$ or $\fii_n(x)_i=-j+1$ or $\fii_n(x)_i=x_i$, which holds whenever $0\geq x_i>-j+1$. Again, we get either $|\fii_n(x)_i-\fii(y)_i|\leq |x_i-y_i|$ or $|\fii_n(x)_i-\fii(y)_i|\leq 1$.

To sum up all cases, if $x,y\in M$, then either $x_{r+1}=y_{r+1}$ or not. In the first case we have
\begin{equation}
\begin{small}
\begin{aligned}
\n\fii_n(x)-\fii_n(y)\n&=\left\n\sum_{i=1}^{r+1}\left(\fii_n(x)_i-\fii_n(y)_i\right)e_i\right\n\leq\left\n\sum_{i=1}^{r}\left(\fii_n(x)_i-\fii_n(y)_i\right)e_i\right\n+1\\
&\leq \text{uc}(E)\n x-y\n+2\text{bc}(E)\n x-y\n=\\
&=\n x-y\n(\text{uc}(E)+2\text{bc}(E)),\label{lip1}
\end{aligned}
\end{small}
\end{equation}
as $M$ is a $\frac{1}{2\text{bc}(E)}$-separated set, while in the $x_{r+1}\neq y_{r+1}$ case we have
\begin{equation}
\n\fii_n(x)-\fii_n(y)\n=\left\n\sum_{i=1}^{r+1}\left(\fii_n(x)_i-\fii_n(y)_i\right)e_i\right\n\leq\text{uc}(E)\n x-y\n.\label{lip2}
\end{equation}
Considering both cases we get the mapping $\fii_n$ is Lipschitz with constant $K=\text{uc}(E)+2\text{bc}(E)$.
\\
\\It remains to prove that the mappings $\{\fii_n\}_{n=1}^s$ satisfy the commutativity condition \ref{bed4}, provided the mappings $\{\fii_n\}_{n=1}^m$ do. Note that for any $m,n\in\N$, $m\leq n$ holds
\begin{equation}
F_n(\mu_n)\cap F_m(\mu_m)\in\{\emptyset,F_n(\mu_n)\}.
\label{tree}
\end{equation}
Out of this fact the commutativity follows easily: Consider $i<j\in\{1,...,s\}$. First, because $\fii_i$ is a retraction onto $M_i$ and the same holds for $\fii_j$ and $M_j$, from $M_i\subseteq M_j$ follows $\fii_j\fii_i=\fii_i$. It remains to prove $\fii_i\fii_j(x)=\fii_i(x)$ for every $x\in M$.

Take $x\in M$. There exists a maximal finite sequence of indices $1=k_0<...<k_l\leq s$ such that
$$x\in F_{k_l}(\mu_{k_l})\subseteq\cdots\subseteq F_{k_0}(\mu_{k_0}).$$
Clearly if $c(i)$ is the biggest index such that $k_{c(i)}\leq i$, then $\fii_i(F_{k_d}(\mu_{k_d}))=\mu_{c(i)}$ for all $d,\ c(i)\leq d\leq l$. This applies analogously for $\fii_j$ with $c(j)$. From the fact that both $x,\mu_{k_{c(j)}}\in F_{k_{c(j)}}(\mu_{k_{c(j)}})\subseteq F_{k_{c(i)}}(\mu_{k_{c(i)}})$ we get simply
$$\fii_i\fii_j(x)=\fii_i(\mu_{k_{c(j)}})=\mu_{k_{c(i)}}=\fii_i(x),$$
which finishes the proof of commutativity.
\\\\
To finish the proof, it remains to show the construction of retractions $\fii_{s+1},...,\fii_t$, where $t=(2r+3)^{r+1}$, $\fii_t:M\to C^{r+1}_{r+1}$ which satisfy \ref{bed1},\ref{bed3},\ref{bed4}.
\\\\
For $i\in\N$ let us define an $i$-predecessor function $p_i:M\to M$ by
$$p_i\left(\sum_{n=1}^{\infty}x_ne_n\right)=\sum_{n=1}^{\infty}x_ne_n-\sgn(x_i)e_i.$$
Now for every $j\in\{1,...,r+1\}$ we introduce sets
\begin{align*}
A_{j,1}=&\{(x_1,...,x_{j-1},r+1,x_{j+1},...,x_{r+1}):\right.\\
&\left.x_i\in\Z\wedge|x_i|\leq r+1\ \text{for}\ i< j\wedge |x_i|\leq r\ \text{for}\ i>j \},\\
A_{j,-1}=&\{(x_1,...,x_{j-1},-r-1,x_{j+1},...,x_{r+1}):\right.\\
&\left.x_i\in\Z\wedge|x_i|\leq r+1\ \text{for}\ i<j\wedge |x_i|\leq r\ \text{for}\ i>j \}.
\end{align*}
Clearly, $A_{j,-1},A_{j,1}\sub C_{r+1}^{r+1}$ and $|A_{j,-1}|=|A_{j,1}|=(2r+1)^{r+1-j}(2r+3)^{j-1}$. Moreover,
$$\bigcup_{\substack{
            j\in\{1,...,r+1\}\\
            i\in\{-1,1\}}}
     A_{j,i}=C^{r+1}_{r+1}\setminus C_r^{r+1}$$
and it is a disjoint union. For each $j$, choose any bijection $w_j:\{1,...,|A_{j,1}|\}\to A_{j,1}$ and fix it. Define $\ov w_j:\{1,...,|A_{j,1}|\}\to A_{j,-1}$, by $\ov w_j(i)=(w_j(i)_1,w_j(i)_2,...,-w_j(i)_j,...,w_j(i)_{r+1})$. For simplicity, for $j\in\{1,...,r+1\},i\in\{1,...,|A_{j,1}|\}$ put
$$\a(j,i)=s+2\sum_{k=1}^{j-1}|A_{k,1}|+i,\ \ \b(j,i)=s+2\sum_{k=1}^{j-1}|A_{k,1}|+|A_{j,1}|+i.$$
Then we finally set $\mu_{\a(j,i)}=w_j(i),\ \mu_{\b(j,i)}=\ov w_j(i).$ Now we define mappings $\{F_n\}_{n=s+1}^t$ via
\begin{align*}
F_{\a(j,i)}&(\mu_{\a(j,i)})=\{x\in F_{\a(j,i)-1}(p_j(\mu_{\a(j,i)})), x_j\geq r+1\},\\
F_{\a(j,i)}&(p_j(\mu_{\a(j,i)}))=F_{\a(j,i)-1}(p_j(\mu_{\a(j,i)}))\setminus F_{\a(j,i)}(\mu_{\a(j,i)}),\\
F_{\a(j,i)}&(\mu_q)=F_{\a(j,i)-1}(\mu_q),\ \ q\in\{1,...,\a(j,i)-1\}, \mu_q\neq p_j(\mu_{\a(j,i)}),
\end{align*}
and
\begin{align*}
F_{\b(j,i)}&(\mu_{\b(j,i)})=\{x\in F_{\b(j,i)-1}(p_j(\mu_{\b(j,i)})), x_{j}\leq -r-1\},\\
F_{\b(j,i)}&(p_j(\mu_{\b(j,i)}))=F_{\b(j,i)-1}(p_j(\mu_{\b(j,i)}))\setminus F_{\b(j,i)}(\mu_{\b(j,i)}),\\
F_{\b(j,i)}&(\mu_q)=F_{\b(j,i)-1}(\mu_q),\ \ q\in\{1,...,\b(j,i)-1\}, \mu_q\neq p_j(\mu_{\b(j,i)}).
\end{align*}
Obviously, the upper equations define mappings $\fii_{\a(j,i)}$ and $\fii_{\b(j,i)}$ for all $j\in\{1,...,r+1\}$ and $i\in\{1,...,|A_{j,1}|\}$, hence the mappings $\{\fii_n\}_{n=s+1}^{t}$ are well-defined and it is an easy check that each such $\fii_n$ is a retraction onto the set $M_n$.

Note that the sets $\{F_n(\mu_n)\}_{n=1}^t$ still satisfy the condition \eqref{tree} so the commutativity condition \ref{bed4} from theorem \ref{one} is obtained similarly as it was done for retractions $\{\fii_n\}_{n=1}^s$.

It remains to show the mappings are Lipschitz-bounded. Let us for simplicity denote $\b_k=\b(k-1,|A_{k-1,1}|)$ for $1<k\leq r+1$ and $\b_1=s$, the index of first retraction $\fii_{\b_k}$ such that $A_{k-1,-1}\sub M_{\b_k}$. Fix $n\in\{s+1,...,t\}$. We will prove that there exists at most one $j=j(n)\in\N$ such that for all $l\in\N$, $l\neq j$ and all $x,y\in M$ we have $|\fii_n(x)_l-\fii_n(y)_l|\leq |x_l-y_l|$ out of which the Lipschitz boundedness of $\fii_n$ follows.
If $n=\a(j,i)$ for some eligible $j,i$, then for every $x\in M$ holds
$$\fii_n(x)_l=\begin{cases}
0 & l>r+1,\\
x_l & (l\leq r+1,|x_l|\leq r)\vee(l<j, |x_l|= r+1),\\
r\sgn(x_l) & (j<l\leq r+1, |x_l|>r)\vee(j=l,x_l<-r)\vee\\
& \left(j=l,x_l> r,\forall\mu\in M_n: \fii_{\b_j}(x)_j\neq p_j(\mu)\right),\\
(r+1)\sgn(x_l) & (l<j, |x_l|> r+1)\vee\\
& \left(l=j,x_l\geq r+1,\exists\mu\in M_n: \fii_{\b_j}(x)_j=p_j(\mu)\right),
\end{cases}$$
while if $n=\b(j,i)$ for some $j,i$, then for every $x\in M$ we have
$$\fii_n(x)_l=\begin{cases}
0 & l>r+1,\\
x_l & (l\leq r+1,|x_l|\leq r)\vee(l<j, |x_l|=r+1)\vee\\
&(l=j,x_l=r+1),\\
r\sgn(x_l) &   (j<l\leq r+1, |x_l|>r)\vee\\
& \left(j=l,x_l< -r,\forall\mu\in M_n: \fii_{\b_j}(x)_j\neq p_j(\mu)\right),\\
(r+1)\sgn(x_l) & (l<j, |x_l|> r+1)\vee(l=j,x_l> r+1)\vee\\
& \left(l=j,x_l\leq -r-1,\exists\mu\in M_n: \fii_{\b_j}(x)_j=p_j(\mu)\right).
\end{cases}$$
If $x,y\in M$, it is not difficult to see that if $|\fii_n(x)_l-\fii_n(y)_l|>|x_l-y_l|$, then $l=j$ and $\fii_n(x)_l=(r+1)\sgn(x_l)$, $\fii_n(y)_l=r\sgn(y_l)$ or vice versa and $x_ly_l>0$. Particularly $|\fii_n(x)_l-\fii_n(y)_l|=1$ and $|x_l-y_l|=0$. For all other $l$, i.e. $l\neq j,l\in\N$ holds $|\fii_n(x)_l-\fii_n(y)_l|\leq|x_l-y_l|$, which is what we need.

Therefore we get by computation similar to those done in \eqref{lip1} and \eqref{lip2} that $\fii_n$ is a Lipschitz mapping with constant $K=\text{uc}(E)+2\text{bc}(E)$, which concludes the induction.

As $\bigcup_{r=1}^\infty C^{r}_r=M$ the condition \ref{bed2} from theorem \ref{one} is also satisfied and hence our proof is finished.
\end{proof}
\begin{remark*}
In \eqref{order} it was not necessary for our construction to choose exactly this order. In fact, any bijection $w:\{1,...,(2r+1)^r\}\to C_r^r$ would suit our purpose. We chose the order \eqref{order} for simplicity. In this case we have $\mu_{a(j-1,i)}=p_j(\mu_{a(j,i)})$ and $\mu_{b(j-1,i)}=p_j(\mu_{b(j,i)})$ for $p_j$ the $j$-predecessor function and $i\in\{1,...,(2r+1)^r\}$, $j\in\{1,...,r\}$.
\end{remark*}
\begin{corollary} If $E=\{e_i\}_{i=1}^{\infty}$ denotes the canonical basis in $c_0$ and $M=M(E)\sub c_0$ the integer grid, then the Free-space $\F(M)$ has a monotone Schauder basis.
\end{corollary}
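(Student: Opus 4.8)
The plan is to specialize the machinery of Lemma \ref{dva} to $X=c_0$ with its canonical basis $E=\{e_i\}$, and then to \emph{sharpen} the resulting uniform Lipschitz bound down to the value $1$. Since the canonical basis of $c_0$ is normalized and $1$-unconditional, Lemma \ref{dva} already produces a sequence of distinct points $\mu_n\in M$ and retractions $\fii_n:M\to M$ satisfying conditions \ref{bed1}--\ref{bed4} of Theorem \ref{one}; a naive reading of that lemma gives only the constant $\text{uc}(E)+2\text{bc}(E)=3$. Hence the one thing left to prove is that these very same retractions are in fact $1$-Lipschitz in the supremum norm, for then Theorem \ref{one} delivers a Schauder basis of $\F(M)$ with basis constant at most $K=1$, that is, a \emph{monotone} basis.

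The heart of the argument is the coordinatewise analysis already carried out inside the proof of Lemma \ref{dva}. There it is shown that for each fixed $n$ there is at most one distinguished index $j=j(n)$ with the property that $|\fii_n(x)_l-\fii_n(y)_l|\le|x_l-y_l|$ for every $l\ne j$ and all $x,y\in M$, while at the exceptional coordinate $j$ the only way to have $|\fii_n(x)_j-\fii_n(y)_j|>|x_j-y_j|$ is the degenerate situation in which $|\fii_n(x)_j-\fii_n(y)_j|=1$ and simultaneously $|x_j-y_j|=0$. In a general unconditional basis this forced the additive loss of the term $2\text{bc}(E)\|x-y\|$ appearing in \eqref{lip1}, because the coordinates had to be recombined through the basis constant. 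In $c_0$ the norm is simply the maximum of the coordinates, so no such recombination occurs and the penalty disappears.

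Concretely, I would estimate, for distinct $x,y\in M$,
\[
\|\fii_n(x)-\fii_n(y)\|_\infty=\max_{l}|\fii_n(x)_l-\fii_n(y)_l|\le\max\{\|x-y\|_\infty,\,1\},
\]
where the bound $\|x-y\|_\infty$ covers every index $l\ne j(n)$ by coordinatewise non-expansiveness, and the bound $1$ covers the single exceptional coordinate. The crucial point is that $M=M(E)$ is the integer grid of $c_0$, hence a $1$-separated set: any two distinct finitely supported integer vectors differ by at least $1$ in some coordinate, so $\|x-y\|_\infty\ge1$. Consequently $\max\{\|x-y\|_\infty,1\}=\|x-y\|_\infty$, and each $\fii_n$ is $1$-Lipschitz.

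The main (and essentially only) obstacle is therefore bookkeeping: one must read off from the proof of Lemma \ref{dva} that, away from the single index $j(n)$, every retraction is coordinatewise non-expansive, and that the exceptional coordinate contributes at most $1$ precisely when the two inputs agree there. Once this is verified, the $c_0$-specific fact that the grid is $1$-separated in the max norm upgrades the estimate to $\|\fii_n(x)-\fii_n(y)\|_\infty\le\|x-y\|_\infty$. Applying Theorem \ref{one} with $K=1$ to the family $\fii_n$, which satisfies all four hypotheses, then yields a monotone Schauder basis of $\F(M)$.
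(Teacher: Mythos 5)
Your proposal is correct and follows essentially the same route as the paper: the paper's proof likewise applies the retractions constructed in Lemma \ref{dva} to $(c_0,E)$ and observes, via the coordinatewise estimates \eqref{lip1} and \eqref{lip2}, that the exceptional coordinate's contribution of at most $1$ is absorbed by the sup norm on the $1$-separated grid, giving $K=1$ and hence a monotone basis by Theorem \ref{one}. Your writeup just makes explicit the bookkeeping that the paper compresses into its citation of those two estimates.
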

\begin{proof}
applying the construction of the retractions from the lemma \ref{dva} to $(c_0,E)$, we get Lipschitz constant $K=1$, (see estimates \eqref{lip1} and \eqref{lip2}). Therefore, $\F(M)$ has a monotone Schauder basis.
\end{proof}
\begin{corollary} Let $\mathcal N\sub c_0$ be a net. Then the Free-space $\F(\mathcal N)$ has a Schauder basis.
\end{corollary}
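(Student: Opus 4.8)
The plan is to reduce the statement to the previous corollary, which already produces a (monotone) Schauder basis for $\F(M)$ when $M=M(E)$ is the integer grid associated to the canonical basis $E$ of $c_0$. The crucial observation is that this grid is \emph{itself} a net in $c_0$; once that is established, the transfer result Proposition \ref{sam-f} together with the fact that possessing a Schauder basis is an isomorphic invariant will finish the argument.

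First I would verify that $M=M(E)$ is a net in $c_0$. Two distinct points of $M$ have integer coordinates, hence differ by at least $1$ in some coordinate, so $M$ is $1$-separated (in particular uniformly discrete, as already noted before Lemma \ref{dva}). For coarse density, take any $x=\sum_i x_ie_i\in c_0$ and round each coordinate to a nearest integer. Since $x\in c_0$, only finitely many indices satisfy $|x_i|\ge\tfrac12$, so the rounded sequence has finite support and integer entries, i.e.\ it is a point $y\in M$, and moreover $\|x-y\|_\infty\le\tfrac12$. Thus $\dist(x,M)\le\tfrac12$ for every $x\in c_0$, and $M$ is a $(1,1)$-net in $c_0$.

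Next, since $c_0$ is separable we have $\dens(c_0)=\omega_0$, and every net in $c_0$ --- in particular both $\mathcal N$ and $M$ --- is a countably infinite set of cardinality $\omega_0$. By Proposition \ref{sam-f}, applied to the two nets $\mathcal N$ and $M$ in the Banach space $c_0$, we obtain $\F(\mathcal N)\cong\F(M)$. The previous corollary guarantees that $\F(M)$ has a Schauder basis, and this property is preserved under linear isomorphism; therefore $\F(\mathcal N)$ has a Schauder basis.

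The only genuine content beyond bookkeeping is the verification that the integer grid is coarsely dense in $c_0$, which is precisely the point where the special geometry of $c_0$ (coordinates tending to zero, so that rounding lands in the finitely supported grid) is used; in a general Banach space the integer grid of a basis need not be a net. Everything else is a direct application of results established earlier in the paper, so I expect no further obstacle.
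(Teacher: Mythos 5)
Your proof is correct, and it differs from the paper's in one meaningful respect: the justification of the key isomorphism $\F(\mathcal N)\cong\F(M)$. The paper argues via the classical fact that \emph{all nets in an infinite-dimensional Banach space are Lipschitz equivalent} (citing Benyamini--Lindenstrauss, Proposition 10.22), so the given net $\mathcal N$ is bi-Lipschitz equivalent to the grid $M$ and the isomorphism of free spaces follows by functoriality. You instead invoke Proposition \ref{sam-f}, the paper's own internal result that any two nets of cardinality $\dens(M)$ in a common metric space have isomorphic free spaces; this avoids the external citation entirely and rests only on the complementation machinery (Theorem \ref{eq-n}, hence Propositions \ref{ret} and \ref{l-1}). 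Your route is more self-contained and more robust --- it does not need bi-Lipschitz equivalence of the nets, which is exactly why the paper itself uses Proposition \ref{sam-f} (rather than Lipschitz equivalence) in the analogous corollary for nets in $\R^n$, where non-equivalent nets exist. The paper's route, by contrast, is a one-line reduction once the classical theorem is granted. A further small merit of your write-up: you actually verify that the integer grid is a $(1,1)$-net in $c_0$ (coordinatewise rounding lands in the finitely supported grid because coordinates tend to zero), a point the paper asserts without proof and which, as you note, genuinely uses the geometry of $c_0$.
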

\begin{proof}
If we use the notation from previous corrolary, $M$ is a $(1,1)$-net in $c_0$. But as all nets in an infinite-dimensional space are Lipschitz equivalent (\cite{benlin}, p.239, Proposition 10.22), $\mathcal N$ is Lipschitz equivalent to the grid $M$ and therefore $\F(\mathcal N)$ is isomorphic to $\F(M)$, which concludes the proof.
\end{proof}
\begin{corollary}
Let  $\mathcal N$ be a net in any of the
following metric spaces: $C(K)$, $K$ metrizable compact, or
$c_0^+$ (the subset of $c_0$ consisting of elements
with non-negative coordinates).
Then $\f(\mathcal N)$ has a Schauder basis.
\end{corollary}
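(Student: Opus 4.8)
The plan is to reduce every listed case to the already-settled case of a net in $c_0$, and then to invoke the elementary fact that possessing a Schauder basis is a linear-isomorphism invariant. The corollary immediately preceding this one asserts that $\f(\mathcal N_0)$ has a Schauder basis whenever $\mathcal N_0$ is a net in $c_0$, and Theorem \ref{ck} supplies precisely the isomorphism needed to transport this conclusion to the spaces $C(K)$ ($K$ infinite) and $c_0^+$. The only case lying outside the reach of Theorem \ref{ck} is $C(K)$ with $K$ finite, which I would handle separately via the explicit grid construction of Lemma \ref{dva} together with Proposition \ref{sam-f}.

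For the main cases, suppose $K$ is an infinite metrizable compact, or that the ambient space is $c_0^+$, and let $\mathcal N$ be a net therein. Fixing any net $\mathcal N_0$ in $c_0$, Theorem \ref{ck} yields $\f(\mathcal N)\cong\f(\mathcal N_0)$. By the preceding corollary $\f(\mathcal N_0)$ admits a Schauder basis $\{u_j\}_{j=1}^\infty$, and if $T\colon\f(\mathcal N_0)\to\f(\mathcal N)$ denotes the isomorphism, then $\{Tu_j\}_{j=1}^\infty$ is a Schauder basis of $\f(\mathcal N)$, since a bounded linear isomorphism carries a Schauder basis onto a Schauder basis. This disposes of the infinite and $c_0^+$ cases.

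It remains to treat $C(K)$ with $K$ a finite metrizable compact, where $C(K)$ is isometric to $\ell_\infty^n$ with $n=\card K$. Let $E=\{e_i\}_{i=1}^n$ be the canonical basis of $\ell_\infty^n$, which is normalized and $1$-unconditional, and let $M$ be the associated integer grid. The construction in the proof of Lemma \ref{dva} should apply essentially verbatim to $E$, the only modification being that no new coordinate direction is introduced beyond the $n$-th, so that the exhausting cubes $C_r^r$ are replaced by $C_r^n$ for $r\ge n$ while still filling out $M$; this gives a Schauder basis of $\f(M)$. Since any net $\mathcal N$ in $\ell_\infty^n$ has the same cardinality as $M$, Proposition \ref{sam-f} gives $\f(\mathcal N)\cong\f(M)$, and transferring the basis through this isomorphism as above finishes this case.

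I do not expect a genuinely hard step: the substance has already been assembled in Theorem \ref{ck}, the $c_0$-corollary, Lemma \ref{dva} and Proposition \ref{sam-f}, and the gluing is just the observation that having a Schauder basis is preserved under isomorphism. The one point demanding a little care is the finite-dimensional case, which escapes Theorem \ref{ck} and must instead be routed through the grid construction of Lemma \ref{dva}; if one prefers, one may simply read ``$K$ metrizable compact'' as ``$K$ infinite metrizable compact'' and omit this case, but I would include it for completeness.
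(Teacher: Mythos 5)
Your proof is correct and follows essentially the paper's own route: Theorem \ref{ck} transports the conclusion of the preceding corollary (a Schauder basis for $\f(\mathcal N_0)$, $\mathcal N_0$ a net in $c_0$) to the free space over a net in $C(K)$, $K$ infinite metrizable compact, or in $c_0^+$, using the isomorphism invariance of having a Schauder basis. Your separate treatment of finite $K$ via the grid construction of Lemma \ref{dva} and Proposition \ref{sam-f} is precisely how the paper handles nets in $\R^n$ in its subsequent corollary, so on this point you are in fact slightly more careful than the paper's one-line proof, which cites only Theorem \ref{ck} (stated for infinite $K$).
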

\begin{proof}
Follows immediately from Theorem \ref{ck}.
\end{proof}
\begin{corollary}
Let  $\mathcal N\sub\R^n$ be a net. Then $\f(\mathcal N)$ has a Schauder basis.
\end{corollary}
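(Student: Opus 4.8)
The plan is to exhibit one especially convenient net in $\R^n$, verify the hypotheses of Theorem \ref{one} for it directly, and then transport the conclusion to every net by Proposition \ref{sam-f}. Since all norms on $\R^n$ are Lipschitz equivalent and $\dens(\R^n)=\omega_0$, any net in $\R^n$ is a countable net of cardinality $\dens(\R^n)$; by Proposition \ref{sam-f} and the remark following it, the free spaces of any two nets in $\R^n$ are mutually isomorphic. Thus it suffices to produce a single net $\mathcal N_0$ for which $\f(\mathcal N_0)$ has a Schauder basis.

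I would take $\R^n$ with the maximum norm, i.e.\ $\ell_\infty^n$, whose canonical basis $E=\{e_1,\dots,e_n\}$ is normalized, monotone and $1$-unconditional, so that $\text{uc}(E)=\text{bc}(E)=1$. The integer grid $M=\Z^n$ is then a $(1,1)$-net, and will serve as $\mathcal N_0$. The task is to produce distinct points $\{\mu_k\}\subseteq M$ and retractions $\fii_k:M\to M$ satisfying conditions \ref{bed1}--\ref{bed4} of Theorem \ref{one} with a uniform Lipschitz constant; Theorem \ref{one} then yields a Schauder basis for $\f(\Z^n)$.

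The construction is the finite-dimensional version of the one in Lemma \ref{dva}. Writing $C_r^d=\{x\in M: x=(x_1,\dots,x_d),\ |x_i|\le r\}$, I would enumerate $M$ by alternating the \emph{dimension-growth} step $C_r^d\to C_r^{d+1}$ and the \emph{radius-growth} step $C_r^d\to C_{r+1}^d$ exactly as in Lemma \ref{dva}, with the sole modification that the dimension-growth steps stop once $d=n$; from that point on only radius-growth steps are performed. Because $\bigcup_{r\ge1}C_r^n=\Z^n=M$, condition \ref{bed2} holds, and the retraction, disjointness and commutativity properties (in particular the key relation \eqref{tree}) together with the coordinatewise Lipschitz estimates \eqref{lip1} and \eqref{lip2} are precisely those established in Lemma \ref{dva}. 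Here they even give the sharp bound $K=1$, as in the $c_0$ case, so $\f(\Z^n)$ carries a monotone Schauder basis.

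The one point deserving attention is that halting dimension growth at $d=n$ does not disrupt the induction: each radius-growth block uses only the coordinates $e_1,\dots,e_n$ already present, so its definition through the sets $A_{j,\pm1}$, the predecessor maps $p_j$, and the accompanying case analysis for the Lipschitz estimate are untouched by the absence of higher coordinates. With $\f(\Z^n)$ shown to admit a Schauder basis, the reduction of the first paragraph completes the proof for an arbitrary net in $\R^n$.
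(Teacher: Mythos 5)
Your proposal is correct and follows essentially the same route as the paper: the paper's proof likewise observes that the construction of Lemma \ref{dva} carries over to the integer grid $\Z^n$ (you merely spell out the finite-dimensional modification, stopping dimension growth at $d=n$), and then transfers the basis to an arbitrary net via Proposition \ref{sam-f}. Your explicit use of Proposition \ref{sam-f} rather than bi-Lipschitz equivalence of nets is exactly right, since nets in $\R^n$ need not be Lipschitz equivalent.
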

\begin{proof}
It follows from the proof of lemma \ref{dva} that $\F(\Z^n)$ has a Schauder basis and $\F(\Z^n)\cong\F(\mathcal N)$ by Proposition \ref{sam-f}, which gives the result.
\end{proof}

\end{document}